\theoremstyle{plain}
\newtheorem{theorem}{Theorem}
\newtheorem{lemma}{Lemma}
\newtheorem{condition}[theorem]{Condition}
\theoremstyle{definition}
\newtheorem{definition}[theorem]{Definition}
\theoremstyle{remark}
\DeclareMathOperator{\divop}{div}
\newcommand{\ud}{\,\mathrm{d}}
\newcommand{\RR}{\mathbb{R}}
\newcommand{\PP}{\mathbb{P}}
\newcommand{\EE}{\mathbb{E}}
\DeclareMathOperator{\std}{Std}
\newcommand{\wt}[1]{\widetilde{#1}}
\DeclareFontFamily{U}{mathx}{\hyphenchar\font45}%
   \DeclareFontShape{U}{mathx}{m}{n}{<->mathx10}{}%
   \DeclareSymbolFont{mathx}{U}{mathx}{m}{n}%
   \DeclareMathAccent{\widebar}{0}{mathx}{"73}%
  \newcommand{\widebar}[1]{\overline{#1}}%
\newcommand{\wb}[1]{\widebar{#1}}
\newcommand{\mc}[1]{\mathcal{#1}}
\newcommand{\veps}{\varepsilon}
\newcommand{\average}[1]{\left\langle#1\right\rangle}
\title[Analysis of multiscale integrators for multiple attractors and irreversible Langevin samplers]{Analysis of multiscale integrators for multiple attractors and
 irreversible Langevin samplers}
\author{Jianfeng Lu}
\address{Department of Mathematics, Department of
  Physics, and Department of Chemistry, Duke University, Box 90320,
  Durham NC 27708, USA}
\email{jianfeng@math.duke.edu}
\author{Konstantinos Spiliopoulos}
\address{Department of Mathematics and Statistics, Boston University, Boston MA 02446,  USA}
\email{kspiliop@math.bu.edu}
\thanks{J.L. was partially supported by the National Science
  Foundation under the grant DMS-1415939. K.S. was partially supported
  by the National Science Foundation (NSF)  CAREER award DMS 1550918. }
\date{\today}
\begin{document}

\begin{abstract}
We study multiscale integrator numerical schemes for a class of stiff stochastic differential equations (SDEs). We consider multiscale SDEs with potentially multiple attractors that behave as diffusions on graphs  as the stiffness parameter goes to its limit. Classical numerical discretization schemes, such as the Euler-Maruyama scheme, become unstable as the stiffness parameter converges to its limit and appropriate multiscale integrators can correct for this. We rigorously establish the convergence of the numerical method to the related diffusion on graph, identifying the appropriate choice of discretization parameters. Theoretical results are supplemented by numerical studies on the problem of the  recently developing area of introducing irreversibility in Langevin samplers in order to accelerate convergence to equilibrium.
\end{abstract}

\maketitle

\section{Introduction}

The main focus of this work is numerical integrators for stochastic
differential equations (SDEs) with multiscale coefficients, with the focus on irreversible first order (overdamped) Langevin dynamics with additive noise.
%SDEs have
%been used ubiquitously in many areas to model dynamical systems under
%random perturbations due to e.g., extrinsic noises caused by thermal
%fluctuations, the degrees of freedom that are not explicitly tracked
%by the system, or perhaps some intrinsic randomness of the system. Due
%to their wide applications,  numerical solution to SDEs has been an
%active area of research in numerical analysis and applied probability.
%
%While our results can be used in other scenarios,
The main motivation
of this work is to design numerical integrators for SDEs arising from
recent works on irreversible Langevin samplers %sampling
\cites{HwangMaSheu2005,ReyBelletSpiliopoulos2014,ReyBelletSpiliopoulos2015}.  In
those works, an irreversible drift term is added to the overdamped
Langevin dynamics (details will be specified in
section~\ref{sec:hmm}), and it is proved that under the proper assumptions the sampling efficiency
increases as the magnitude of the irreversible drift goes to infinity,
which is validated by numerical studies, see
\cites{ReyBelletSpiliopoulos2014,ReyBelletSpiliopoulos2015,DuncanPavliotisLelievre2016, DuncanPavliotisZygalakis2017}. However,
at the same time, when a strong irreversible drift is added to the
original overdamped Langevin equation, the stiffness of the system is
inevitably increased, and thus prevents the application of standard
numerical integrators to the resulting systems. The goal of this work
is to study multiscale integrators that allow to enlarge the magnitude
of the irreversible drift without having to sacrifice the stability of
the numerical algorithm.

For SDEs with multiscale coefficients, it is well understood that
we shall take into account the multiscale structure in order to design better
integrators (see e.g., the books \cite{PavliotisStuart:08, E:11}). The key idea is to use the averaged limit of the
SDE when the scale is well separated. Hence,  it is not necessary
to accurately resolve the scales of the original system, but we can rather work with the
averaged limit. This has been the underlying principle of the
heterogeneous multiscale methods (HMM) \cite{EEngquist:03, HMMReview1, HMMReview2}, in particular see \cite{EVE:03, EVE:07, ELiuVE:05} for its
applications to stochastic differential equations. Other numerical approaches for stiff SDEs were
also developed in \cite{AbdulleCirilli:08, AbdulleLi:08,
  TaoOwhadiMarsden2010, BurrageTian:01, AbdulleZygalakis:12,
  GivonKevrekidisKupferman:06}.

Our numerical scheme follows the ideas of the FLAVORS method developed
in \cite{TaoOwhadiMarsden2010}, which on the algorithmic level is very
similar to the seamless version of HMM method developed in
\cite{ELu:07, ERenVE:09}. The basic idea is to use a split-step
integrator which combines a short time integration of the whole SDE
and a longer time integration of the SDE without the stiff terms. The
numerical analysis of such schemes \cite{TaoOwhadiMarsden2010} shows
that in the case that the variables of the SDEs can be one-to-one
mapped to a set of ``fast'' and ``slow'' variables, the numerical
scheme converges to the averaged limit which consists of the dynamics
of the slow component. We emphasize that the algorithm does not
require explicit knowledge of the mapping that transforms the system
into fast and slow variables, while it does require the forcing terms
of the SDE can be separated into stiff and non-stiff terms.

The main contribution of this work is to extend the analysis of
\cite{TaoOwhadiMarsden2010} to situations that a one-to-one mapping of
the original degree of freedom into fast and slow variables is not
possible. In particular, for the irreversible Langevin sampler the function $U$ that maps the configurational
space to the energy is clearly not one-to-one. In fact, it is well
known that in the limit the SDE converges to a diffusion on an associated graph
\cite{FreidlinWentzell88, FreidlinWentzell1993}, for which besides the energy, one has to add the index
variable to represent the state space. Our main result proves that the
multiscale integrator converges to a  diffusion on graph as the
scale separation parameter tends to infinity and the discretization parameters are appropriately chosen. In the one well case, our proof follows ideas
of \cite{TaoOwhadiMarsden2010} appropriately adjusting for the different limiting behavior that we have here. Then, the results are being extended to the multiple well case by using techniques similar to those of the classical
averaging techniques of \cite{BrinFreidlin2000,FreidlinWeber2004,
  FreidlinWentzell1993}. However, since we work in the discrete time framework and not in the continuous time framework,  we need to obtain bounds with explicit dependence on the discretization parameters.

In this paper, we mainly study convergence  to the invariant measure of the limiting dynamics.  The mathematical analysis suggests how to choose the parameters of the problem (micro step and macro step) with respect to a given  value of the stiffness parameter in order for the HMM integrator to sample from the correct measure. In addition, being able to numerically approach the limit of the stiffness parameter allows us to approximate via simulation
the limiting transition probabilities between the different attractors of the system. 

In regards to future research, it would be of great interest to address the challenges that come up in convergence in finite time points, as it done in \cite{HutzenthalerJentzenKloeded2011} for Euler's method. It would also be of great interest to obtain nonasymptotic bounds in the spirit, for example, of \cite{DurmusMoulines2017}.

This paper is organized as follows. We will introduce the SDEs from
the irreversible Langevin sampler and the HMM multiscale integrator in
Section~\ref{sec:hmm}. Some numerical results are presented in
Section~\ref{sec:numerics} to validate the method. The averaging
results of the SDEs, in particular, convergence to the diffusion on
graphs are recalled in Section~\ref{sec:averaging}. The main results and the proofs are given in Section~\ref{sec:analysis}.

\section{HMM integrator for irreversible Langevin sampling
  scheme}\label{sec:hmm}

Consider the overdamped Langevin equation
\begin{equation}\label{eq:overdamp}
  \ud Z_t = -\nabla U(Z_t) \ud t + \sqrt{2 \beta} \ud W_t, Z_{0}=z_{0},
\end{equation}
where $U: E \to \RR$ is a given potential, $\beta = (k_B T)^{-1}$ is
the temperature, and $W_t$ is the standard multi-dimensional
Wiener process. Here $E\subseteq \mathbb{R}^{d}$ denotes the state space.
See Section~\ref{sec:averaging} for conditions on $U$. The overdamped
Langevin dynamics \eqref{eq:overdamp} is often used to sample the
Boltzmann-Gibbs measure, see \cite{RobertsTweedie:96}, with density given by
\begin{equation}
  \varrho(z) \propto e^{- U(z) / \beta},\nonumber
\end{equation}
which is the invariant measure of \eqref{eq:overdamp} under mild
conditions. Note that the infinitesimal generator of \eqref{eq:overdamp}
is symmetric with respect to the invariant measure, and thus the
dynamics \eqref{eq:overdamp} is reversible in time, \textit{i.e.}, it
satisfies detailed balance.

In \cite{HwangMaSheu2005,ReyBelletSpiliopoulos2014,
  ReyBelletSpiliopoulos2015}, it was proposed to add to the
overdamped Langevin dynamics an irreversible forcing to accelerate the
sampling, the resulting dynamics reads
\begin{equation}\label{eq:irrevSDE}
  \ud Z^{\veps}_t = \bigl[ - \nabla U(Z^{\veps}_t) + \frac{1}{\veps} C( Z^{\veps}_t) \bigr] \ud t + \sqrt{2 \beta} \ud W_t,  Z_{0}=z_{0},
\end{equation}
where the vector field $C:\mathbb{R}^{d}\mapsto\mathbb{R}^{d}$.  The invariant measure is maintained if the vector fields $C$  satisfies $\divop( Ce^{-U/\beta})=0$, or equivalently
\[
\divop C = \beta^{-1} C \cdot \nabla U \,,
\]
where $ C \cdot \nabla U$ denotes the classical inner product between $C$ and $\nabla U$. A convenient choice, which we assume henceforth, is to pick $C$ such that
\[
\divop C =0 \,, \quad {\rm and} \quad  C \cdot \nabla U =0 \,.
\]

This is not the most general choice for $C$, but it has the advantage
that allows to choose $C$ independently of $\beta$. One such choice of
$C(z)$ is $C(z)=J\nabla U(z)$, where $J$ is any antisymmetric matrix.
These conditions mean that the flow generated by $C$ preserves
Lebesgue measure since it is divergence-free, at the same time, since
$U$ is a constant of the motion, the micro-canonical measure on the
surfaces $\{U = z\}$ are preserved as well.  Let us
  remark that in physics terminology, a Langevin equation is a
  second-order dynamics which also include momentum variables in
  addition to the ``position variable'' $Z_t$ as in the overdamped
  equation \eqref{eq:overdamp}. The physical Langevin equation is in
  fact irreversible due to the momentum degree of freedom, while here
  we have adopted the conventional name of irreversible Langevin
  sampler for the first-order dynamics \eqref{eq:irrevSDE} with
  additional irreversible drift on the overdamped Langevin equations.

The amplitude of the irreversible drift in \eqref{eq:irrevSDE} is
chosen to be $\frac{1}{\veps}$. We will consider the regime that
$\veps \ll 1$. Using the large deviation action functional of the
empirical measure, it is shown in
\cite{ReyBelletSpiliopoulos2014,ReyBelletSpiliopoulos2015} that the
dynamics \eqref{eq:irrevSDE} converges faster to the invariant measure
for a larger irreversible drift, i.e., as $\veps$ becomes
smaller. From another point of view, as will be recalled in
section~\ref{sec:averaging}, in the limit $\veps \to 0$, the slow component associated to the solution of the SDE
\eqref{eq:irrevSDE} converges to the averaging limit which is a
diffusion on an associated graph, and hence the entropy associated
with the iso-surfaces of $U$ is completely removed and only the
energetic barrier is left in the limit.\footnote{While it is possible
  to combine the irreversible sampling with other techniques to
  overcome the energetic barrier, we will not go further in this
  direction as it is not the focus of the current work.} In
\cite{ReyBelletSpiliopoulos2015} it is also established that the
asymptotic (as $t\rightarrow\infty$) variance of the estimator is
decreasing in $\veps$ and in the limit as $\veps\downarrow 0$, it
converges to the asymptotic (as $t\rightarrow\infty$) variance of the
corresponding sampling problem on the graph where the limiting
diffusion lives.

Increasing the irreversible drift however comes with a price: The
right hand side of the SDE \eqref{eq:irrevSDE} becomes rather stiff as
$\veps \to 0$, and as a result, standard integrators (for example the
Euler-Maruyama scheme) would require vanishingly small time step size
to resolve the fast scale of the dynamics. As $\veps$ goes to zero,
the SDE contains multiple time scale, and thus it is better to use multiscale integrators for such dynamics.

In this work, we investigate a multiscale integrator for stiff SDEs as
\eqref{eq:irrevSDE} proposed in \cite{TaoOwhadiMarsden2010}, which is
also rather close to the seamless version of HMM scheme
\cite{ELiuVE:05, ELu:07, ERenVE:09}. For a macro time step $\delta$
and micro time step $\tau$ such that $\tau \ll \veps \ll \delta$, from
$t_n$ to $t_n + \delta$, we evolve the dynamics
\begin{subequations}
\begin{align}
  & \ud \wb{Z}_t = \bigl[ - \nabla U(\wb{Z}_t) + \frac{1}{\veps} C(\wb{Z}_t)
  \bigr] \ud t + \sqrt{2 \beta} \ud W_t &&  \qquad  t \in [t_n, t_n + \tau); \\
  & \ud \wb{Z}_t = - \nabla U(\wb{Z}_t)  \ud t + \sqrt{2 \beta} \ud W_t &&  \qquad  t \in [t_n + \tau, t_n + \delta).
\end{align}
\end{subequations}

This can be understood as a split-step time integrator where for the
short time step $\tau$ we use the whole SDE and for the long time step
$\delta - \tau$ we neglect the irreversible drift.  The equations
above can be integrated using standard numerical schemes, and for
definiteness, in this work we will discretize using the standard
Euler-Maruyama method, which gives
\begin{subequations}\label{eq:hmmintegrator}
\begin{align}
  & \wb{Z}_{t_n + \tau} - \wb{Z}_{t_n} = - \tau \nabla U (\wb{Z}_{t_n}) + \frac{\tau}{\veps} C(\wb{Z}_{t_n}) + \sqrt{2 \beta \tau} \, \xi_n; \\
  & \wb{Z}_{t_n + \delta} - \wb{Z}_{t_n + \tau} = - (\delta - \tau) \nabla U(\wb{Z}_{t_n + \tau}) + \sqrt{2 \beta (\delta - \tau)} \, \xi'_n,
\end{align}
\end{subequations}
where $\xi_n$ and $\xi'_n$ are independent standard normal random
variables.

As will be discussed in Sections \ref{sec:averaging} and
  \ref{sec:analysis}, if the dynamical system $\dot{z}_{t}=C(z_{t})$
  does not have a unique invariant measure on each connected component
  of the level sets of $U(z)$ and the dimension is bigger than two,
  then one needs to modify the scheme by considering an additional
  regularizing noise, see Condition \ref{A:Assumption2}. In particular, in this case we may need to regularize the problem by introducing  an additional artificial noise component in the fast dynamics, i.e.,
\begin{equation}\label{Eq:RegularizedIrreversibleLangevin0}
dZ^{\epsilon}_{t}=\left[-\nabla U(Z^{\epsilon}_{t})dt+\sqrt{2\beta}dW_{t}\right]+ \left[ \frac{1}{\epsilon}  \wt{C}(Z^{\epsilon}_{t})dt+\sqrt{\frac{\kappa}{\epsilon}} \sigma(Z^{\epsilon}_{t})dW^{o}_{t}\right].
\end{equation}
Here, $W$ and $W^{o}$ are independent standard Wiener processes, the
matrix $\sigma$ will be specified in Condition \ref{A:Assumption4} below, and we have defined
\begin{equation}
\wt{C}_i(z)=C_i(z)+\frac{\kappa}{2}\sum_{j=1}^{d}\frac{\partial \left[\sigma\sigma^{T}(z)\right]_{j,i}}{\partial z_{j}}, \qquad i = 1, \ldots, d. \label{Eq:RegularizedIrreversibleLangevin1}
\end{equation}

If $\kappa=0$ then the fast motion is the deterministic dynamical system $\dot{z}_{t}=C(z_{t})$ and
$Z^{\epsilon}_{t}$ is  a random perturbation of this dynamical system. For example, if $d$ is even we can take $C$ to be the
Hamiltonian vector field $C(z)=J \nabla U(z)$. If $\kappa>0$ we have random perturbations of diffusion processes with a
conservation law. The artificial perturbation is chosen such that (\ref{Eq:RegularizedIrreversibleLangevin0}) still samples from the same Boltzmann-Gibbs measure $\rho(z)\propto e^{-U(z)/\beta}$. In addition, we emphasize here that the limiting behavior as $\epsilon\rightarrow 0$ is not affected by the additional regularizing noise, since neither $\kappa$ nor $\sigma(z)$  appear in the limiting dynamics, see Theorem \ref{T:ConvNumericalScheme}. Additionally, as it is proven in \cite{ReyBelletSpiliopoulos2016} adding such perturbations does not make the performance worse in terms of all three criteria, spectral gap, asymptotic variance and large deviations rate function. 

As we shall also see in \eqref{Eq:HMMscheme} in
  Section~\ref{sec:analysis}, in the case of the perturbation
  \eqref{Eq:RegularizedIrreversibleLangevin0}, the algorithm naturally
  extends to the form
\begin{subequations}\label{eq:hmmintegratorRegularized}
\begin{align}
  & \wb{Z}_{t_n + \tau} - \wb{Z}_{t_n} = - \tau \nabla U (\wb{Z}_{t_n}) + \frac{\tau}{\veps} \wt{C}(\wb{Z}_{t_n}) + \sqrt{2 \beta \tau} \, \xi_n + \sqrt{\frac{\tau}{\epsilon}}\sqrt{\kappa}\sigma(\wb{Z}_{t_n})\, \xi^{''}_n; \\
  & \wb{Z}_{t_n + \delta} - \wb{Z}_{t_n + \tau} = - (\delta - \tau) \nabla U(\wb{Z}_{t_n + \tau}) + \sqrt{2 \beta (\delta - \tau)} \, \xi'_n,
\end{align}
\end{subequations}
where $\xi_n$, $\xi'_n$, $\xi^{''}_n$ are independent standard normal random variables.

We will show that with proper choices of the time steps $\tau$ and
$\delta$ as $\veps \to 0$, the numerical schemes
\eqref{eq:hmmintegrator}, and more generally \eqref{eq:hmmintegratorRegularized}, converge to the diffusion on graphs, which
is the averaging limit of \eqref{eq:irrevSDE}. Thus, we may use
\eqref{eq:hmmintegrator}, or more generally \eqref{eq:hmmintegratorRegularized}, to numerically discretize the SDE which is
consistent in the asymptotic regime as $\veps \to 0$.

Let us remark that it is also possible to use multiple micro steps
with length $\tau$ rather than just one such step as in
\eqref{eq:hmmintegrator}, which is analogous to the original HMM
integrators.  For the purpose of sampling invariant measure, one could
also combine the integrator with Metropolis adjustment steps as in the
MALA method \cite{RobertsTweedie:96}, see
  \cite{OttobrePillaiSpiliopoulos2017} for some preliminary results
  towards this direction. We will focus on the numerical analysis of
the scheme \eqref{eq:hmmintegrator}, and more generally
\eqref{eq:hmmintegratorRegularized}, and leave these extensions to
future works.

\section{Numerical examples}\label{sec:numerics}

Before we turn to the analytical results, let us present a few
numerical tests for the multiscale integrator. We will first consider
the sampling efficiency of the irreversible Langevin sampler with the
multiscale HMM integrator. We will then show some numerical examples
illustrating properties of the integrator. We limit ourselves to
simple toy examples as the focus is to demonstrate the numerical
properties of the integrator and validate the numerical analysis
results, rather than applying to scheme to realistic problems.

For the first test, we consider a $2D$ symmetric double well potential given by
\begin{equation}\label{eq:doublewell}
  U(x, y) = \frac{1}{4} (x^2 - 1)^2 + y^2,
\end{equation}
with inverse temperature $\beta = 0.1$. We choose $C = J \nabla U$ with $J =
\bigl( \begin{smallmatrix} 0 & 1 \\ -1 & 0
\end{smallmatrix} \bigr)$. The initial condition is set to be the
origin, and we consider the empirical average of the observable $f(x,
y) = x + y^2$ over the time interval $[0, 2000]$ with a burn-in period
$T_{\text{burn}} = 20$. To test the performance of the sampling scheme
based on the multiscale integrator, we compare the empirical average
with the true average of the observable with respect to the invariant
measure. Note that in this case, due to the choice of the potential
and the observable, the true average is explicitly given by
$\average{f} = \frac{1}{2} \beta = 0.05$.  We also estimate the
asymptotic variance of the sampling scheme by dividing the sampled
data points into $20$ batches.

Denote the sampling error (with respect to the observable $f$) as
$\text{Err}_f$ and the asymptotic variance (with respect to the
observable $f$) as $\text{AVar}_f$. The numerical results for various choice of
$\veps$ are shown in Table~\ref{tab:doublewell}, in which we also
include the results for direct Euler-Maruyama discretization for
comparison. In these tests, we fix the macro time step $\delta =
5e$-$3$ (for the direct Euler-Maruyama discretization, $\delta$ is the
time step size), and choose the micro time step $\tau = 10 \delta
\veps = 0.05 \veps$. For a given set of parameters, we report the mean
and standard deviation estimated from $20000$ independent runs of the
algorithms. We note that the Euler-Maruyama scheme is unstable for
$\veps$ below $0.1$ with the given $\delta$.

% \begin{table}[h]
% \begin{tabular}{c|c|c|c|cc|cc}
%   \hline
%   & $\veps$ & $\tau$ & $\delta$ & $\EE(\text{Err}_f)$ & $\std(\text{Err}_f)$ &
%                                                                                $\EE(\text{AVar}_f)$ & $\std(\text{AVar}_f)$ \\
%   \hline
%   E-M & $5$ & & $5e$-$3$ & $1.2478e$-$1$ & $9.3383e$-$2$ & $3.4198e$-$1$ & $7.5015e$-$2$ \\
%   & $5e$-$1$ & & $5e$-$3$ & $7.1563e$-$2$ & $5.4549e$-$2$ & $1.4521e$-$1$ & $4.2309e$-$2$ \\
%   & $1e$-$1$ & & $5e$-$3$ & $4.0312e$-$2$ & $2.5798e$-$2$ & $1.7275e$-$2$ & $5.5163e$-$3$ \\
%   \hline
%   HMM & $1e$-$2$ & $5e$-$4$ & $5e$-$3$ & $3.9982e$-$2$ & $2.5348e$-$2$ & $1.7397e$-$2$ & $5.4576e$-$3$ \\
%   & $1e$-$3$ & $5e$-$5$ & $5e$-$3$ & $3.9529e$-$2$ & $2.5706e$-$2$ & $1.7107e$-$2$ & $5.4546e$-$3$ \\
%   & $1e$-$4$ & $5e$-$6$ & $5e$-$3$ & $3.9090e$-$2$ & $2.4613e$-$2$ & $1.7301e$-$2$ & $5.5807e$-$3$ \\
%   \hline
% \end{tabular}
% \medskip
% \caption{Comparison of the Euler-Maruyama scheme and the HMM multiscale integrator for the double well potential \eqref{eq:doublewell}. The macro time step is $\delta = 5e$-$3$ with the micro time step $\tau = 0.05 \veps$. The mean and standard deviation of the sampling error $\text{Err}_f$ and asymptotic variance $\text{AVar}_f$ are reported for various choice of $\veps$.
%   The Euler-Maruyama scheme is not stable for smaller value of $\veps$ (for example, $\veps = 5e$-$2$) and thus not reported.}\label{tab:doublewell_old}
% \end{table}

\begin{table}[h]
\begin{tabular}{c|c|c|c|cc|cc}
  \hline
  & $\veps$ & $\tau$ & $\delta$ & $\EE(\text{Err}_f)$ & $\std(\text{Err}_f)$ &
                                                                               $\EE(\text{AVar}_f)$ & $\std(\text{AVar}_f)$ \\
  \hline
  E-M & $\infty$ & & $5e$-$3$ & $1.2573e$-$1$ & $9.4758e$-$2$ & $3.4865e$-$1$ & $7.5354e$-$2$  \\
  & $5$ & & $5e$-$3$ & $1.2430e$-$1$ & $9.3051e$-$2$ & $3.4243e$-$1$ & $7.3883e$-$2$ \\
  & $5e$-$1$ & & $5e$-$3$ & $7.2015e$-$2$ & $5.4729e$-$2$ & $1.4527e$-$1$ & $4.0925e$-$2$ \\
  & $1e$-$1$ & & $5e$-$3$ & $4.0662$-$2$ & $2.5206e$-$2$ & $1.7146e$-$2$ & $5.4579e$-$3$ \\
  \hline
  HMM & $1e$-$2$ & $5e$-$4$ & $5e$-$3$ & $3.9735e$-$2$ & $2.5167e$-$2$ & $1.7344e$-$2$ & $5.5658e$-$3$ \\
  & $1e$-$3$ & $5e$-$5$ & $5e$-$3$ & $3.9568e$-$2$ & $2.5404e$-$2$ & $1.7253e$-$2$ & $5.5275e$-$3$ \\
  & $1e$-$4$ & $5e$-$6$ & $5e$-$3$ & $3.9416e$-$2$ & $2.5374e$-$2$ & $1.7325e$-$2$ & $5.5570e$-$3$ \\
  \hline
\end{tabular}
\medskip
\caption{Comparison of the Euler-Maruyama scheme and the HMM multiscale integrator for the double well potential \eqref{eq:doublewell}. The macro time step is $\delta = 5e$-$3$ with the micro time step $\tau = 0.05 \veps$. The mean and standard deviation of the sampling error $\text{Err}_f$ and asymptotic variance $\text{AVar}_f$ are reported for various choice of $\veps$. The case $\veps = \infty$ means sampling without adding the irreversible drift.
  The stability threshold for the Euler-Maruyama scheme for the time step size $\delta = 5e$-$3$ is around $\veps = 8.25e$-$2$.}\label{tab:doublewell}
\end{table}

% \begin{table}
%   \begin{tabular}{c|c|c|c|cc|cc}
%     \hline
%     & $\veps$ & $\tau$ & $\delta$ & $\EE(\text{Err}_f)$ & $\std(\text{Err}_f)$  & $\EE(\text{AVar}_f)$ & $\std(\text{AVar}_f)$ \\
%     \hline
%     E-M & $5e$-$2$ & & $1e$-$3$ & $3.2342e$-$2$ & $2.2662e$-$2$ & $1.6678e$-$2$ & $5.3673e$-$3$ \\
%     \hline
%     HMM  & $1e$-$3$ & $2e$-$5$ & $1e$-$3$ & $3.1832e$-$2$ & $2.2346e$-$2$ & $1.6515e$-$2$ & $5.3137e$-$3$ \\
%     & $1e$-$4$ & $2e$-$6$ & $1e$-$3$ & $3.1628e$-$2$ & $2.2233e$-$2$ & $1.6462e$-$2$ & $5.3621e$-$3$ \\
%     & $1e$-$5$ & $2e$-$7$ & $1e$-$3$ &  $3.1228e$-$2$ & $2.2487e$-$2$ & $1.6524e$-$2$ & $5.2017e$-$3$ \\
%     \hline
%     HMM & $1e$-$4$ & $1e$-$6$ & $1e$-$3$ & $3.6570e$-$2$ & $2.7574e$-$2$ & $3.7407e$-$2$ & $1.1820e$-$2$\\
%     & $1e$-$5$ & $1e$-$7$ & $1e$-$3$ & $3.5446e$-$2$ & $2.6375e$-$2$ &
% $3.7668e$-$2$ & $1.1787e$-$2$ \\
%     \hline
% \end{tabular}
% \smallskip
% \caption{Comparison of the Euler-Maruyama scheme and the HMM multiscale integrator for the double well potential \eqref{eq:doublewell} with reduced  macro time step $\delta = 1e$-$3$ (compared with $\delta = 5e$-$3$ in Table~\ref{tab:doublewell}). The Euler-Maruyama scheme is now stable with $\veps = 5e$-$2$, but not stable for smaller values of $\veps$. The micro time step in HMM scheme is chosen to be either $\tau = 20 \delta \veps =  0.02 \veps$ or $\tau = 10 \delta \veps = 0.01 \veps$. The mean and standard deviation of the sampling error $\text{Err}_f$ and asymptotic variance $\text{AVar}_f$ are reported for various choice of $\veps$.
% } \label{tab:doublewell2_old}
% \end{table}

\begin{table}
  \begin{tabular}{c|c|c|c|cc|cc}
    \hline
    & $\veps$ & $\tau$ & $\delta$ & $\EE(\text{Err}_f)$ & $\std(\text{Err}_f)$  & $\EE(\text{AVar}_f)$ & $\std(\text{AVar}_f)$ \\
    \hline
    E-M & $5e$-$2$ & & $1e$-$3$ & $3.1930e$-$2$ & $2.2377e$-$2$ & $1.6638e$-$2$ & $5.2778e$-$3$ \\
    \hline
    HMM  & $1e$-$3$ & $2e$-$5$ & $1e$-$3$ & $3.1751e$-$2$ & $2.2387e$-$2$ & $1.6637e$-$2$ & $5.3156e$-$3$ \\
    & $1e$-$4$ & $2e$-$6$ & $1e$-$3$ & $3.1886e$-$2$ & $2.2319e$-$2$ & $1.6702e$-$2$ & $5.2691e$-$3$ \\
    & $1e$-$5$ & $2e$-$7$ & $1e$-$3$ &  $3.1790e$-$2$ & $2.2361e$-$2$ & $1.6611e$-$2$ & $5.3201e$-$3$ \\
    \hline
    HMM & $1e$-$4$ & $1e$-$6$ & $1e$-$3$ & $3.5224e$-$2$ & $2.6448e$-$2$ & $3.7264e$-$2$ & $1.1721e$-$2$\\
    & $1e$-$5$ & $1e$-$7$ & $1e$-$3$ & $3.5207e$-$2$ & $2.6483e$-$2$ &
$3.7391e$-$2$ & $1.1698e$-$2$ \\
    \hline
\end{tabular}
\smallskip
\caption{Comparison of the Euler-Maruyama scheme and the HMM multiscale integrator for the double well potential \eqref{eq:doublewell} with reduced  macro time step $\delta = 1e$-$3$ (compared with $\delta = 5e$-$3$ in Table~\ref{tab:doublewell}). The Euler-Maruyama scheme is now stable with $\veps = 5e$-$2$; and the stability threshold is around $\veps = 3.8e$-$2$. The micro time step in HMM scheme is chosen to be either $\tau = 20 \delta \veps =  0.02 \veps$ or $\tau = 10 \delta \veps = 0.01 \veps$. The mean and standard deviation of the sampling error $\text{Err}_f$ and asymptotic variance $\text{AVar}_f$ are reported for various choice of $\veps$.
} \label{tab:doublewell2}
\end{table}

We remark that the particular choice of the observable $f = x + y^2$
makes the accurate sampling rather challenging in this case: As $\EE x
= 0$ due to the symmetry, the correct sampling of the average value
requires fine balance of the time the trajectory spent in left and
right well of the double well potential. This explains the high
relative error that $\EE(\text{Err}_f)$ is on the same order of $\EE
f$.

We make several observations in regards to the numerical results in
Table~\ref{tab:doublewell}. First, from the result of the
Euler-Maruyama scheme for various $\veps$, it is clear that a larger
irreversible drift (smaller $\veps$) enhances the sampling as the
sampling error and also the asymptotic variance decrease. Second, while for a fixed computational cost the Euler-Maruyama scheme
becomes unstable for small $\veps$, the HMM scheme works well for
smaller $\veps$ which further reduces the sampling error. Moreover, we
remark that while the integrator works well for very small $\veps$,
the improvement in this example for going to a very small $\veps$ is
limited, this is expected since when $\veps \to 0$, as will be shown
later, the scheme becomes an approximation of the averaging limit of
the SDE. Thus the sampling efficiency is determined by the limiting
system, and the impact of a finite but small $\veps$ may be
negligible. Of course, this depends on how fast ergodicity kicks in allowing the averaging limit to be achieved. Note that the HMM multiscale integrator allowed us to reach to the $\veps \to 0$ limit stably, while the Euler-Maruyama scheme blows up for small values of $\veps$ keeping $\delta$ fixed.

We further test the dependence of the HMM scheme on the choice of
parameters in Table~\ref{tab:doublewell2}. In those tests, we decrease
the value of macro time step to $\delta = 1e$-$3$. In comparison, we
also list the result of the Euler-Maruyama scheme with the same time
step, which is now stable for smaller $\veps = 5e$-$2$ (but loses
stability if we further reduce $\veps$). The results for the HMM
scheme with different $\veps$ and $\tau = 20\delta \veps = 0.02\veps$
suggest that it is better to take a smaller $\veps$, though the
improvement is again marginal in this case. Compared with
Table~\ref{tab:doublewell}, we see that a smaller $\delta$ improves
the sampling results, though of course this comes with a higher
computational cost.

In Table~\ref{tab:doublewell2}, we also consider choice of the micro
time step $\tau$ with different ratios of $\tau / (\delta \veps)$ to
see the dependence.  We observe that in the case with the smaller
$\delta$, if we still take $\tau$ such that $\tau = 10 \delta \veps$
as in Table~\ref{tab:doublewell}, the performance of the sampling
scheme is in fact worse than the direct Euler-Maruyama scheme (with a
larger $\veps$). Thus it motivates the choice of a larger $\tau$,
which increases the effective sampling time of the fast dynamics, and
hence is expected to lead to better performance. This is confirmed in
the numerical results with the choice of
$\tau = 20 \delta \veps = 0.02\veps$. Unfortunately, if we further
increase $\tau$ (choosing for example
$\tau = 30 \delta \veps = 0.03\veps$ here), the HMM scheme becomes
unstable. This instability can be understood in our theoretical
analysis as the assumption that $(\tau / \veps)^{3/2} \ll \delta$ in
the convergence result Theorem~\ref{T:ConvNumericalScheme}.
With a fixed macro time step $\delta$ (and hence
fixing computational budget), it seems that a good practice is to
choose a larger ratio $\tau / (\veps \delta)$ while making sure that
the scheme is stable.

\medskip

In the next example, we consider a more complicated potential, still
in two dimension, given by
\begin{equation}\label{eq:RBS3}
  U(x, y) = \frac{1}{4} \Bigl[ (x^2 -1)^2 ((y^2 - 2)^2 + 1) + 2 y^2 - y / 8 \Bigr] + e^{-8x^2 - 4 y^2}
\end{equation}
with inverse temperature $\beta = 0.2$. This is the potential
considered in \cite[Example 3]{ReyBelletSpiliopoulos2015}. We take the
observable $f(x, y) = (x-1)^2 + y^2$ and setting $\delta = 5e$-$3$ and
$\tau = 10 \delta \veps = 0.05 \veps$. The total simulation time is $T
= 2000$ with a burn-in period $T_{\text{burn}} = 20$. $20000$
independent runs of the algorithms are used to get statistics of the
sampling error and asymptotic variance. The results are reported in
Table~\ref{tab:RBS3}. Here the true average of the observable is
obtained by a discretization of the Gibbs distribution on the phase
space with a fine mesh, which gives approximately $\EE f \approx
2.1986$.  Similarly as in the double well potential the Euler-Maruyama
scheme loses stability for $\veps$ smaller than $0.1$. The conclusion
of the numerical results is similar to that of the double well
example.
% \begin{table}[h]
% \begin{tabular}{c|c|c|c|cc|cc}
%   \hline
%   & $\veps$ & $\tau$ & $\delta$ & $\EE(\text{Err}_f)$ & $\std(\text{Err}_f)$ &
%   $\EE(\text{AVar}_f)$ & $\std(\text{AVar}_f)$ \\
%   \hline
%   E-M & $5$ & & $5e$-$3$ & $5.0964e$-$1$ & $3.6448e$-$1$ & $2.4957e00$ & $5.3972e$-$1$ \\
%   & $5e$-$1$ & & $5e$-$3$ & $3.1799e$-$1$ & $2.3650e$-$1$ & $1.8792e00$ & $3.7341e$-$1$ \\
%   & $1e$-$1$ & & $5e$-$3$  & $1.0730e$-$1$ & $8.0109e$-$2$ & $3.4238e$-$1$ & $1.0624e$-$1$ \\
% %  & $5e$-$2$ & $1.0192e$-$1$ & $7.6786e$-$2$ & $2.9405e$-$1$ & $8.8519e$-$2$ \\
%   \hline
%   HMM & $1e$-$2$ & $5e$-$4$ & $5e$-$3$ & $1.0347e$-$1$ & $7.6486e$-$2$ & $3.0648e$-$1$ & $9.2405e$-$2$ \\
%   & $1e$-$3$ & $5e$-$5$ & $5e$-$3$ & $1.0255e$-$1$ & $7.7384e$-$2$ & $2.9778e$-$1$ & $9.1258e$-$2$ \\
%   & $1e$-$4$ & $5e$-$6$ & $5e$-$3$ & $1.0108e$-$1$ & $7.7460e$-$2$ & $2.9760e$-$1$ & $8.8149e$-$2$ \\
%   \hline
% \end{tabular}
% \smallskip
% \caption{Comparison of the Euler-Maruyama scheme and the HMM multiscale integrator for the potential \eqref{eq:RBS3}. The macro time step is $\delta = 5e$-$3$ and the micro time step $\tau = 0.05 \veps$. The mean and standard deviation of the sampling error $\text{Err}_f$ and asymptotic variance $\text{AVar}_f$ are reported for various choice of $\veps$.}\label{tab:RBS3_old}

% \end{table}

\begin{table}[h]
\begin{tabular}{c|c|c|c|cc|cc}
  \hline
  & $\veps$ & $\tau$ & $\delta$ & $\EE(\text{Err}_f)$ & $\std(\text{Err}_f)$ &
  $\EE(\text{AVar}_f)$ & $\std(\text{AVar}_f)$ \\
  \hline
  E-M  & $\infty$ & & $5e$-$3$ & $4.0022e$-$1$ & $3.6110e$-$1$ & $2.5204e00$ & $5.3007e$-$1$  \\
 & $5$ & & $5e$-$3$ & $4.9166e$-$1$ & $3.5878e$-$1$ & $2.5132e00$ & $5.2312e$-$1$ \\
  & $5e$-$1$ & & $5e$-$3$ & $3.1675e$-$1$ & $2.3714e$-$1$ & $1.8778e00$ & $3.6895e$-$1$ \\
  & $1e$-$1$ & & $5e$-$3$  & $1.0716e$-$1$ & $8.1086e$-$2$ & $3.4154e$-$1$ & $1.0282e$-$1$ \\
  & $6e$-$2$ & & $5e$-$3$ & $1.7342e$-$1$ & $5.6951e$-$2$ & $6.5024e$-$2$ & $2.0713e$-$2$ \\
  \hline
  HMM & $1e$-$2$ & $5e$-$4$ & $5e$-$3$ & $1.0283e$-$1$ & $7.6923e$-$2$ & $3.0595e$-$1$ & $9.3185e$-$2$ \\
  & $1e$-$3$ & $5e$-$5$ & $5e$-$3$ & $1.0107e$-$1$ & $7.5928e$-$2$ & $3.0000e$-$1$ & $9.1497e$-$2$ \\
  & $1e$-$4$ & $5e$-$6$ & $5e$-$3$ & $1.0216e$-$1$ & $7.7365e$-$2$ & $2.9930e$-$1$ & $9.0837e$-$2$ \\
  \hline
\end{tabular}
\smallskip
\caption{Comparison of the Euler-Maruyama scheme and the HMM multiscale integrator for the potential \eqref{eq:RBS3}. The macro time step is $\delta = 5e$-$3$ and the micro time step $\tau = 0.05 \veps$. The mean and standard deviation of the sampling error $\text{Err}_f$ and asymptotic variance $\text{AVar}_f$ are reported for various choice of $\veps$. The case $\veps = \infty$ means sampling without adding the irreversible drift.
  The stability threshold for the Euler-Maruyama scheme  is around $\veps = 6e$-$2$, which is included in the above table.}\label{tab:RBS3}
\end{table}

Next we plot the $x$ coordinate of a sample trajectory and the
corresponding potential energy $U(x(t), y(t))$ for the double well
potential in Figure~\ref{fig:traj}. The simulation is done with $\veps
= 1e$-$5$ and time step sizes $\tau = 5e$-$7$ and $\delta = 0.05$. The
trajectory is plotted at the end of every macro step (so on the
interval of $\delta$). The plot focuses on the time period $[20, 21]$
during which the trajectory mainly stays in the left portion of the
phase space $\{x < 0\}$. As can be clearly observed from the figure,
while the solution of the SDE oscillates very fast, the potential
energy $U$ changes much more slowly, which suggests that $U$ is a slow
variable in the averaging limit.

\begin{figure}[ht]
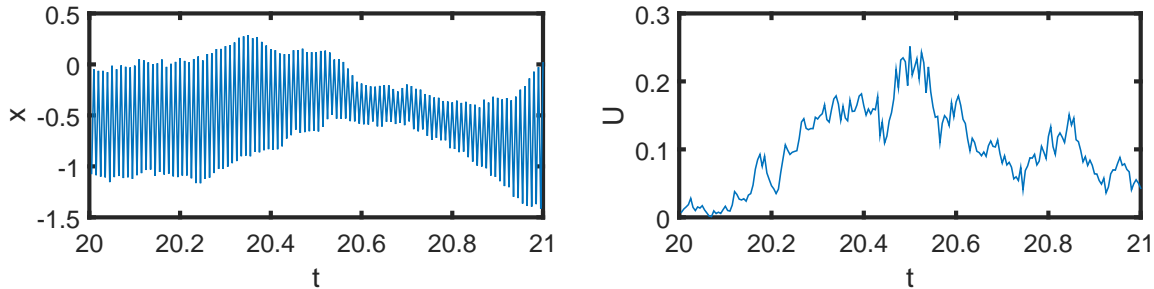

  \includegraphics[width = .45\textwidth]{x1hist.eps}
  \quad
  \includegraphics[width = .45\textwidth]{Uhist.eps}
  \caption{A sample trajectory of the HMM multiscale integrator for
    the double well potential \eqref{eq:doublewell} with $\veps =
    1e$-$5$, $\tau = 5e$-$7$, and $\delta = 0.05$. (Left) The $x$
    cooridnate of the trajectory for $t \in [20, 21]$. (Right) The
    potential energy $U$ associated with the
    trajectory.}\label{fig:traj}
\end{figure}

It is clear that the map from $(x, y)$ to $U$ is not one-to-one in
these examples. For the double well potential, below the energy of the
saddle point $U(0, 0) = \frac{1}{4}$, the isopotential curve is
disjoint and separated into the left and right half-planes
corresponding to the two minima of the potential at $(-1, 0)$ and $(1,
0)$, which are represented on the corresponding graph of the potential
as two separated edges that meet at the interior vertex $U =
\frac{1}{4}$ corresponding to the saddle point (see
section~\ref{sec:averaging} where these concepts are
recalled). Therefore, to get from one component of the isopotential
curve to the other, the trajectory has to go up in energy and cross
the interior vertex; when the energy is decreased from above
$\frac{1}{4}$, the trajectory would go into one of the edges,
corresponding to one of the disjoint components. In fact, such an
event of energy goes above the saddle point energy can be observed
already in Figure~\ref{fig:traj} around $t = 20.5$, where the energy
first goes up and when it drops down, the trajectory goes back to the
same potential well (on the left half-plane).

To see how the multiscale integrator captures diffusion across the
interior vertices on the graph, we record the number of transitions to
each edge with lower energy connecting to the saddle point when the
energy of the trajectory is decreasing from above that of the
vertex. For the double well potential, we count the transitions into
each component for a long trajectory with total time $T = 2000$ (with
burn-in time $T_{\text{burn}} = 20$) when the energy decreases from
above $\frac{1}{4}$. The simulation parameters are $\veps = 1e$-$5$,
$\tau = 5e$-$7$ and $\delta = 0.005$.  For a single realization of the
algorithm, we obtain
\begin{equation*}
  N_{\text{left}} = 3719, \quad \text{and} \quad N_{\text{right}} = 3659,
\end{equation*}
where $N_{\text{left}}$ denotes the number of times the trajectory
goes to the left well and $N_{\text{right}}$ for the right well during
the time period $[T_{\text{burn}}, T]$. Note that the empirical
probability of going to the left well is $0.5041$, very close to the
theoretical value of the diffusion on the graphs (which is $0.5$ due
to the symmetry). While the data reported is only for one realization,
this is the typical behavior observed for the algorithm.

In comparison, let us now consider a similar test for a tilted double well potential (so that the symmetry is broken):
\begin{equation}
  U(x, y) =   \frac{1}{4} (x^2 - 1)^2 - \frac{1}{8} x + y^2.\nonumber
\end{equation}
The tilting by $-\frac{1}{8} x$ moves the local minima and the saddle
point of the potential to approximately $(-0.9304, 0)$, $(1.0575, 0)$,
and $(-0.12705, 0)$. We repeat the same calculation as in the
symmetric double well case and obtain
\begin{equation*}
  N_{\text{left}} = 2485, \quad \text{and} \quad N_{\text{right}} = 3704,
\end{equation*}
so that the empirical probability of going to the left well is
$0.4015$. Due to the asymmetry, it is less likely to go to the left
well when the energy decreases from above the value of the saddle
point. This is consistent with the theoretical results for the multiple well case that we will establish in section~\ref{sec:analysis}.

\section{The averaging problem}\label{sec:averaging}

The irreversible perturbations with a small $\epsilon$ induce a fast motion on the constant potential surface and
slow motion in the orthogonal direction. Using the theory of diffusions on  graphs and the related averaging principle, see  \cite{BrinFreidlin2000,FreidlinWeber2004,FreidlinWentzell1993}, we may identify the limiting motion of the slow component, see \cite{ReyBelletSpiliopoulos2015}. The fast motion on constant potential surfaces decreases the variance of the estimator as the phase space is explored more efficiently.
Let us consider the level set
\begin{equation}
d(x)=\left\{z\in E: U(z)=x\right\},\nonumber
\end{equation}
where $E\subseteq\mathbb{R}^{d}$ denotes the state space.
We then denote by $d_{i}(x)$  the connected components of $d(x)$, i.e.,
\begin{equation}
d(x)=\bigcup_{i}d_{i}(x)\,.\nonumber
\end{equation}

We define  $\Gamma$ to be the graph which is homeomorphic to the set of connected components $d_{i}(x)$ of the level sets $d(x)$.
Exterior vertexes correspond to minima of $U$, whereas interior vertexes correspond to saddle points of $U$. The edges of $\Gamma$
are indexed by $I_{1},\cdots, I_{m}$. Each point on $\Gamma$ is indexed by a pair $y=(x,i)$ where $x$ is the value of $U$ on the level
set corresponding to $y$ and $i$ is the edge number containing $y$. Clearly the pair $y=(x,i)$
forms a global coordinate on $\Gamma$. Let $Q: E \mapsto\Gamma$ with $Q(z)=(U(z),i(z))$
be the corresponding projection on the graph. For an edge $I_{k}$ and a vertex $O_{j}$ we write $I_{k}\sim O_{j}$ if $O_{j}$
lies at the boundary of the edge $I_{k}$. We endow the tree $\Gamma$ with the natural topology. It is known that $\Gamma$ forms a
graph  with interior vertexes of order  two or three, see for example \cite{FW2}.

If the dynamical system $\dot{z}_{t}=C(z_{t})$ does not
have a unique invariant measure on each connected component $d_{i}(x)$, then we may need to regularize the problem by introducing  an additional artificial noise component in the fast dynamics, i.e.,
\begin{equation}\label{Eq:RegularizedIrreversibleLangevin}
dZ^{\epsilon}_{t}=\left[-\nabla U(Z^{\epsilon}_{t})dt+\sqrt{2\beta}dW_{t}\right]+ \left[ \frac{1}{\epsilon}  \wt{C}(Z^{\epsilon}_{t})dt+\sqrt{\frac{\kappa}{\epsilon}} \sigma(Z^{\epsilon}_{t})dW^{o}_{t}\right].
\end{equation}
Here, $W$ and $W^{o}$ are independent standard Wiener processes, the
matrix $\sigma$ will be specified below, and we have defined
\[
\wt{C}_i(z)=C_i(z)+\frac{\kappa}{2}\sum_{j=1}^{d}\frac{\partial \left[\sigma\sigma^{T}(z)\right]_{j,i}}{\partial z_{j}}, \qquad i = 1, \ldots, d.
\]

If $\kappa=0$ then the fast motion is the deterministic dynamical system $\dot{z}_{t}=C(z_{t})$ and
$Z^{\epsilon}_{t}$ is  a random perturbation of this dynamical system. For example, if $d$ is even we can take $C$ to be the
Hamiltonian vector field $C(z)=J \nabla U(z)$. If $\kappa>0$ we have random perturbations of diffusion processes with a
conservation law.

We make several technical assumptions on $C(z)$, $U(z)$ and
$\sigma(z)$ in order to guarantee that the averaging principle applies
to \eqref{Eq:RegularizedIrreversibleLangevin}. We make these
assumptions in order to guarantee that the fast process has a unique
invariant measure and will have $U$ as a smooth first integral.

Let us next identify the corresponding fast and slow components. The fast motion corresponds to the infinitesimal generator
\begin{equation}
\widehat{\mathcal{L}}g(z)=\wt{C}(z)\nabla g(z)+\frac{\kappa}{2}\text{tr}\left[\sigma\sigma^{T}(z)D^{2}g(z)\right]= C(z)\nabla g(z)+\frac{\kappa}{2}\nabla \left[\sigma\sigma^{T}(z)\nabla g(z)\right]\,.\label{Eq:FastOperator}
\end{equation}

Let us write $\widehat{Z}_{t}$ for the diffusion process that has infinitesimal generator $\widehat{\mathcal{L}}$. In
order to guarantee the existence of a unique invariant measure for the
fast dynamics we assume:
\begin{condition}\label{A:Assumption2}
  In dimension $d=2$, we take $\kappa\geq 0$. In dimension $d>2$, we
  either assume that the dynamical system $\dot{z}_{t}=C(z_{t})$ has a
  unique invariant measure on each connected component $d_{i}(x)$, in
  which case we take $\kappa\geq 0$, or otherwise we assume that
  $\kappa>0$.
\end{condition}

As far as the potential function $U(z)$ and the perturbation $C(z)$ are concerned, we shall assume Condition \ref{A:Assumption3}. Condition \ref{A:Assumption3} guarantees that $C(z)$ does not affect the invariant measure of the process and it imposes natural growth and structural conditions on the potential $U$.
\begin{condition}\label{A:Assumption3}
The potential function $U(z)$ and the perturbation $C(z)$ satisfy
\begin{enumerate}
\item{There exists $a>0$ such that $U\in\mathcal{C}^{(2+a)}(E)$ and $C\in \mathcal{C}^{(1+a)}(E)$.}
\item{ $U(z)\geq A_{1}|z|^{2}$, $\nabla U(z)\geq A_{2}|z|$ and $\Delta U(z)\geq A_{3}$ for sufficiently large $|z|$, where $A_{1},A_{2},A_{3}$ are positive constants.}
\item{$\divop C(z)=0$ and $C(z)\cdot\nabla U(z)=0$.}
\item{$U$ has a finite number of critical points $z_{1},\ldots,z_{m}$
    and at these points the Hessian matrix $D^2 U$ is
    non-degenerate.}
\item{There is at most one critical point for each connected level set component of $U$.}
\item{If $z_{k}$ is a critical point of $U$, then there exists a
    constant $d_{k}>0$ such that $C(z)\leq d_{k}|z-z_{k}|$.}
\item{If $d=2$ and $\kappa=0$, then $C(z)=0$ implies $\nabla U(z)=0$ and for any saddle point $z_{k}$ of $U(z)$, there exists a constant $c_{k}>0$ such that $|C(z)|\geq c_{k}|z-z_{k}|$.}
\end{enumerate}
\end{condition}

In regards to the additional artificial perturbation by the noise $W^{o}_{t}$, i.e., when $\kappa>0$, we assume Condition \ref{A:Assumption4}. Condition \ref{A:Assumption4} guarantees that the extra regularization with the noise does not affect the invariant measure and that it is such that the subsequent averaging analysis goes through, see \cite{FreidlinWeber2004}.
\begin{condition}\label{A:Assumption4}
\begin{enumerate}
\item{The matrix $\sigma(z)\sigma^{T}(z)$ is symmetric, non-negative
    definite, and with smooth entries.}
\item{$\sigma(z)\sigma^{T}(z)\nabla U(z)=0$ for all $z\in E$.}
\item{For any $z\in E$ and $\xi \in \mathbb{R}^d$ such that
    $\xi\cdot\nabla U(z)=0$ we have that
    $\lambda_{1}(z)|\xi|^{2}\leq \xi^{T}\sigma(z)\sigma^{T}(z)\xi\leq
    \lambda_{2}(z)|\xi|^{2}$
    where $\lambda_{1}(z)>0$ if $\nabla U(z)\neq 0$ and there exists a
    constant $K$ such that $\lambda_{2}(z)<K$ for all $z\in E$.
% D\subset \mathbb{R}^{d}$ where $D$ contains all critical points of $U$.
Moreover if $z_{k}$ is a critical point for $U$, then there are positive constants $k_{1},k_{2}$ such that for all $z$ in a neighborhood of $z_{k}$
    \begin{equation*}
    \lambda_{1}(z)\geq k_{1} |z-z_{k}|^{2}, \text{ and }     \lambda_{2}(z)\leq k_{2} |z-z_{k}|^{2}.
    \end{equation*}}
\item{Let $\lambda_{i,k}$ be the eigenvalues of the Hessian of $U(z)$ at the critical points $z_{k}$ where $k=1,\cdots,m$ and $i=1,\cdots, d$. Then we assume that
$0<\kappa\ll\left(K\max_{i,k}\lambda_{i,k}\right)^{-1}$.}
\end{enumerate}
\end{condition}

We remark here that the end result does not depend on the additional regularizing noise, since $\sigma(z)$ does not appear in the limiting dynamics.
%Let us next identify the corresponding fast and slow components. The fast motion corresponds to the infinitesimal generator
%\begin{equation}
%\widehat{\mathcal{L}}g(z)=\wt{C}(z)\nabla g(z)+\frac{\kappa}{2}\text{tr}\left[\sigma\sigma^{T}(z)D^{2}g(z)\right]= C(z)\nabla g(z)+\frac{\kappa}{2}\nabla \left[\sigma\sigma^{T}(z)\nabla g(z)\right]\,.\label{Eq:FastOperator}
%\end{equation}
%Let us write $\widehat{Z}_{t}$ for the diffusion process that has infinitesimal generator $\widehat{\mathcal{L}}$.
Recall that $\widehat{Z}_{t}$ is the diffusion process that has infinitesimal generator $\widehat{\mathcal{L}}$. Conditions \ref{A:Assumption3} and \ref{A:Assumption4} guarantee that with probability one, if the initial point of $\widehat{Z}$ is in a connected component $d_{i}(x)$, then $\widehat{Z}_{t}\in d_{i}(x)$ for all $t\geq 0$. Indeed, by It\^{o} formula we have
\begin{equation*}
U(\widehat{Z}_{t})=U(\widehat{Z}_{0})+\int_{0}^{t}\widehat{\mathcal{L}}U(\widehat{Z}_{s})ds+\int_{0}^{t}\nabla U(\widehat{Z}_{s})\sigma(\widehat{Z}_{s})dW_{s}.
\end{equation*}
Since $C(z)\nabla U(z)=0$ and $\sigma(z)\sigma^{T}(z)\nabla U(z)=0$ we obtain with probability one $\int_{0}^{t}\widehat{\mathcal{L}} U(\widehat{Z}_{s})ds=0$. The quadratic variation of the stochastic integral is also zero, due to $\sigma(z)\sigma^{T}(z)\nabla U(z)=0$, which implies that with probability one  $\int_{0}^{t}\nabla U(\widehat{Z}_{s})\sigma(\widehat{Z}_{s})dW_{s}=0$. Thus, we indeed get that for all $t\geq 0$ $\widehat{Z}_{t}\in d_{i}(x)$ given that the initial point belongs to the particular connected component of the level set $d_{i}(x)$. In particular,  It\^{o} formula gives for a test function $f\in\mathcal{C}^{2}(\mathbb{R})$
\begin{align}
f(U(Z^{\veps}_{t}))&=f(U(z))+\int_{0}^{t}\left(\Bigl[-|\nabla U(Z^{\veps}_{s})|^{2}+\beta \text{tr}\left[D^{2}U(Z^{\veps}_{s})\right]\Bigr]f^{'}(U(Z^{\veps}_{s}))+\beta |\nabla U(Z^{\veps}_{s})|^{2} f^{''}(U(Z^{\veps}_{s}))\right)ds\nonumber\\
&\quad+\sqrt{2\beta}\int_{0}^{t}f^{'}(U(Z^{\veps}_{s}))\nabla U(Z^{\veps}_{s})dW_{s}\nonumber%\label{Eq:ItoFormula}
\end{align}
where $Z^{\veps}$ is the solution to
\eqref{Eq:RegularizedIrreversibleLangevin}.

Let $m(z)$ be a smooth invariant density with respect to Lebesgue measure for the
process $\widehat{Z}_{t}$. The fact that $m(z)$ exists and is smooth follows from the discussion in Section 2.3 of \cite{FreidlinWeber2004}. Then, the proof of \cite[Lemma 2.3]{FreidlinWeber2004} and the fact that $t\geq 0$ $\widehat{Z}_{t}\in d_{i}(x)$ if $
\widehat{Z}_{0}\in d_{i}(x)$ imply that if $(x,i)\in\Gamma$ is not a vertex, there exists a unique invariant measure $\mu_{x,i}$ concentrated on
the connected component $d_{i}(x)$ of $d(x)$ which takes the form
\begin{equation*}
\mu_{x,i}(A)=\frac{1}{T_{i}(x)}\oint_{A}\frac{m(z)}{\left|\nabla U(z) \right|}\ell(dz) \,,
\end{equation*}
where $\ell(dz)$ is the surface measure on $d_i(x)$ and
$T_{i}(x)=\oint_{d_{i}(x)}\frac{m(z)}{\left|\nabla U(z)
  \right|}\ell(dz)$. % is the period of the trajectory concentrated on $d_{i}(z)$.
Notice that if $(x,i)\in\Gamma$ is not a vertex, then the invariant
density on $d_{i}(x)$ is
\begin{equation*}
m_{x,i}(z)=\frac{m(z)}{T_{i}(x)\left|\nabla U(z) \right|},\quad z\in d_{i}(x).
\end{equation*}

We remark here that in the case $\kappa>0$, it is relatively easy to see that independently of the form of the matrix $\sigma(z)\sigma^{T}(z)$, the fact that $\divop(C)=0$ implies that the Lebesgue measure is invariant for the diffusion process corresponding to the operator
$\widehat{\mathcal{L}}$. Hence, in that case any constant function is an invariant density. Also, in the case $d=2$ and $\kappa=0$, one immediately obtains from Condition \ref{A:Assumption3} that $m(z)=\frac{|\nabla U(z)|}{|C(z)|}$, see  \cite[Proposition 2.1]{FreidlinWeber2004}.

Given a sufficiently smooth  function $f(z)$, define its average over the related connected component of the level set of $U(z)$ by
\begin{equation*}
\widehat{f}(x,i)=\oint_{d_{i}(x)}f(z)m_{x,i}(z)\ell(dz)=\frac{1}{T_{i}(x)}\oint_{d_{i}(x)}\frac{f(z)}{\left|\nabla U(z) \right|}m(z)\ell(dz).\label{Eq:AverageOnGraph}
\end{equation*}

We write $\mathcal{L}_{0}$ for the infinitesimal generator of the process $Z_{t}$ given by (\ref{eq:irrevSDE}) with $C(z)=0$. Let us then set
\begin{align*}
\widehat{\mathcal{L}_{0}U}(x,i)&=\oint_{d_{i}(x)}\mathcal{L}_{0}U(z)m_{x,i}(z)\ell(dz)=\frac{1}{T_{i}(x)}\oint_{d_{i}(x)}\frac{\mathcal{L}_{0}U(z)}{\left|\nabla U(z)\right|}m(z)\ell(dz),\nonumber\\
\widehat{A}(x,i)&=\oint_{d_{i}(x)}2\beta |\nabla U(z)|^{2}m_{x,i}(z)\ell(dz)=\frac{1}{T_{i}(x)}\oint_{d_{i}(x)}\frac{2\beta \nabla U(z)\cdot\nabla U(z)}{\left|\nabla U(z)\right|}m(z)\ell(dz).
\end{align*}
and then consider the one-dimensional process $Y_{t}$ on the branch
$I_{i}$ governed by the infinitesimal generator
\begin{equation}
\mathcal{L}^{Y}_{i}g(x)= \widehat{\mathcal{L}_{0}U}(x,i) g'(x)+\frac{1}{2}\widehat{A}(x,i)g''(x).\label{Eq:AveragedOperator}
\end{equation}

Within each edge $I_{i}$ of $\Gamma$,
$Q(Z^{\veps}_{t}) = (U(Z^{\veps}_{t}), i(Z^{\veps}_{t}))$ converges
as $\veps\downarrow 0$ to a process with infinitesimal generator
$\mathcal{L}^{Y}_{i}$. In order to uniquely define the limiting
process, we need to specify the behavior at the vertexes of the tree,
which amounts to imposing restrictions on the domain of definition of
the generator, denoted by $\mathcal{L}^{Y}$, of the Markov
process. For this purpose, we have the following definition
\begin{definition}\label{Def:ProcessOnTree}
We say that $g$ belongs in the domain of definition of $\mathcal{L}^{Y}$, denoted by $\mathcal{D}(\mathcal{L}^{Y})$, of the diffusion $Y$, if
\begin{enumerate}
\item{The function $g(x)$ is twice continuously differentiable in the interior of an edge $I_{i}$.}
\item{The function $x\mapsto\mathcal{L}^{Y}_{i}g(x)$ is continuous on $\Gamma$.}
\item{At each interior vertex $O_{j}$ with edges $I_{i}$ that meet at $O_{j}$, the following gluing condition holds
    \begin{equation*}
    \sum_{i: I_{i}\sim O_{j}}\pm b_{ji}D_{i}g(O_{j})=0
    \end{equation*}
    where, if $\gamma_{ji}$ is the separatrices  curves that meet at $O_{j}$,  we have set
    \begin{equation*}
    b_{ji}=\oint_{\gamma_{ji}}\frac{2\beta \left|\nabla U(z)\right|^{2}}{\left|\nabla U(z)\right|}m(z)\ell(dz)=2\beta\oint_{\gamma_{ji}}\left|\nabla U(z)\right|m(z)\ell(dz).
    \end{equation*}
    Here one chooses $+$ or $-$ depending on whether the value of $U$ increases or decreases respectively along
 the edge $I_{i}$ as we approach $O_{j}$.  $D_{i}$ represents the derivative in the direction of the edge $I_{i}$.
    }
\end{enumerate}
Moreover, within each edge $I_{i}$ the  process $Y_{t}$ is a diffusion process with infinitesimal generator $\mathcal{L}^{Y}_{i}$.
\end{definition}

Consider now the process $Y_{t}$ that has the aforementioned
$\mathcal{L}^{Y}$ as its infinitesimal generator with domain of
definition $\mathcal{D}(\mathcal{L}^{Y})$, as defined in Definition
\ref{Def:ProcessOnTree}.  Such a process is a continuous strong Markov
process, e.g., \cite[Chapter 8]{FW2}. Then, for any $T>0$,
$Q(Z^{\veps}_{t})$ converges weakly in $\mathcal{C}([0,T];\Gamma)$ to
the Markov process $Y_{t}$ on the tree as $\veps\downarrow 0$.  In
particular, we have the following theorem.
\begin{theorem}[Theorem 2.1 of \cite{FreidlinWeber2004}]\label{T:LimitOnGraphb}
Let $Z^{\veps}_{t}$ be the process that satisfies (\ref{Eq:RegularizedIrreversibleLangevin}). Assume Conditions \ref{A:Assumption2}, \ref{A:Assumption3} and \ref{A:Assumption4}. Let $T>0$ and consider
 the Markov process on the tree $\left\{Y_{t},t\in[0,T]\right\}$ as defined by Definition \ref{Def:ProcessOnTree}. We have
\begin{equation}
 Q(Z^{\veps}_{\cdot})\rightarrow Y_{\cdot}, \text{ weakly in } \mathcal{C}([0,T];\Gamma), \text{ as }\veps\downarrow 0.\nonumber
\end{equation}
\end{theorem}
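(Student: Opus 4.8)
The plan is to prove the weak convergence through the martingale problem, following the Freidlin--Wentzell averaging machinery on graphs. Write $Y^{\veps}_{\cdot} = Q(Z^{\veps}_{\cdot}) = (U(Z^{\veps}_{\cdot}), i(Z^{\veps}_{\cdot}))$ for the projected process on $\Gamma$. The goal is to show that the laws of $Y^{\veps}_{\cdot}$ form a tight family in $\mathcal{C}([0,T];\Gamma)$ and that every weak limit point solves the martingale problem associated with $(\mathcal{L}^{Y}, \mathcal{D}(\mathcal{L}^{Y}))$ of Definition~\ref{Def:ProcessOnTree}; since that martingale problem is well posed, all limit points coincide with the law of $Y_{\cdot}$ and the whole family converges.

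First I would establish tightness. The key observation is that, by the It\^o formula already recorded above, the energy $U(Z^{\veps}_{t})$ evolves with drift $\mathcal{L}_{0}U(Z^{\veps}_{s}) = -|\nabla U(Z^{\veps}_{s})|^{2} + \beta\,\mathrm{tr}[D^{2}U(Z^{\veps}_{s})]$ and diffusion coefficient $2\beta|\nabla U(Z^{\veps}_{s})|^{2}$, neither of which contains the stiff factor $1/\veps$ (this is precisely what $C\cdot\nabla U = 0$ and $\sigma\sigma^{T}\nabla U = 0$ buy us). On any compact range of energies these coefficients are bounded, and together with the growth of $U$ controlling excursions to large energy, a Kolmogorov/Aldous criterion yields tightness of $U(Z^{\veps}_{\cdot})$ in $\mathcal{C}([0,T];\RR)$. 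The index component $i(Z^{\veps}_{t})$ changes only when the trajectory crosses a vertex, so tightness on $\Gamma$ follows once one shows that the process does not accumulate pathologically at the vertices, which is part of the vertex analysis below.

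Next I would identify the limit inside an edge. Away from the vertices the fast dynamics $\widehat{Z}_{t}$ with generator $\widehat{\mathcal{L}}$ is ergodic on each connected component $d_{i}(x)$ with the unique invariant measure $\mu_{x,i}$ furnished by Conditions~\ref{A:Assumption3} and \ref{A:Assumption4}. The averaging principle then gives, for a bounded continuous $h$, that the additive functionals $\int_{0}^{t} h(Z^{\veps}_{s})\,ds$ converge to $\int_{0}^{t}\widehat{h}(U(Z^{\veps}_{s}),i(Z^{\veps}_{s}))\,ds$. Applying this to $h = \mathcal{L}_{0}U$ and $h = 2\beta|\nabla U|^{2}$ in the It\^o formula replaces the coefficients by their averages $\widehat{\mathcal{L}_{0}U}(x,i)$ and $\widehat{A}(x,i)$, so that any weak limit point, restricted to the interior of an edge $I_{i}$, solves the martingale problem for $\mathcal{L}^{Y}_{i}$ from \eqref{Eq:AveragedOperator}. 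The hard part will be the gluing conditions at the interior vertices: one must show that a limit point has the domain of its generator restricted by the gluing conditions at each $O_{j}$ with the specific weights $b_{ji}$. The argument is local. In a shrinking neighborhood of a saddle point one uses the nondegeneracy of $D^{2}U$ and the lower bounds on $|C|$ (or, for $\kappa>0$, the ellipticity of $\sigma\sigma^{T}$ on the level-set tangent space) to estimate both the occupation time near the vertex and the relative probabilities that the trajectory exits into each adjacent edge; the weights $b_{ji}$, the period-type integrals over the separatrices $\gamma_{ji}$, emerge from the balance of these exit fluxes. Controlling these estimates uniformly in $\veps$ and verifying that the limit spends zero time at the vertices, so that the gluing condition is the only admissible boundary behavior, is the technically demanding step, and it is exactly here that the local Conditions~\ref{A:Assumption3}(5)--(6) and \ref{A:Assumption4}(3) are used.

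Finally I would invoke well-posedness to conclude. The martingale problem for a diffusion on $\Gamma$ with edge generators $\mathcal{L}^{Y}_{i}$ and the gluing conditions of Definition~\ref{Def:ProcessOnTree} is well posed: the operator $\mathcal{L}^{Y}$ generates a Feller semigroup on $\mathcal{C}(\Gamma)$, so its martingale problem has a unique solution, namely the law of $Y_{\cdot}$. Combining tightness (so that limit points exist), the identification inside edges and the gluing conditions at vertices (so that every limit point solves this martingale problem), and uniqueness, Prokhorov's theorem yields $Q(Z^{\veps}_{\cdot}) \to Y_{\cdot}$ weakly in $\mathcal{C}([0,T];\Gamma)$ as $\veps\downarrow 0$.
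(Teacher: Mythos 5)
The paper does not prove this statement: it is imported verbatim as Theorem 2.1 of \cite{FreidlinWeber2004}, so there is no internal proof to compare against. Your outline is the standard Freidlin--Wentzell strategy (tightness of the projected process, averaging within each edge, occupation-time and exit-probability estimates near the vertices yielding the gluing weights $b_{ji}$, and well-posedness of the resulting martingale problem on $\Gamma$), which is exactly how the cited reference and \cite{FreidlinWentzell88} proceed, and which the paper itself mirrors in the discrete-time analogue (Theorem~\ref{T:ConvNumericalScheme} via Lemmas~\ref{L:ConvergenceWithinEdge}, \ref{L:ExteriorVertex}, \ref{L:InteriorVertex} and \ref{L:ProbabilitiesAtVertex}).
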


\section{Analysis of the numerical HMM method}\label{sec:analysis}

To analyze the numerical scheme, following \cite{TaoOwhadiMarsden2010}, let us
assume that there exists a random variable $\Phi_{h}^{\alpha}(z)$ and
an $h_{0}$ such that for all $0< h\leq h_{0}$ and $\alpha = 0$ or
$\alpha =\frac{1}{\veps}$ one has the estimate
\begin{align}
\left(\mathbb{E}\left|\Phi_{h}^{\alpha}(z)-z+h \nabla U(z)-\alpha h \wt{C}(z)-\sqrt{h}\sqrt{2\beta}\xi-\sqrt{h}\sqrt{\kappa\alpha}\sigma(z)\xi'\right|^{2}\right)^{1/2}&\leq C h^{3/2}(1+\alpha)^{3/2},\label{Eq:Integrator1}
\end{align}
where $\xi,\xi'$ are independent standard normal random variable. Then, the algorithm becomes
\begin{align}
\bar{Z}^{\veps}_{0}&=z_{0}\nonumber\\
\bar{Z}^{\veps}_{(\kappa+1)\delta}&=\left(\Phi_{\delta-\tau}^{0}\circ \Phi_{\tau}^{\frac{1}{\veps}}\right)(\bar{Z}^{\veps}_{\kappa\delta})\label{Eq:HMMscheme}
%\bar{Z}^{\veps}_{t}&=\bar{Z}^{\veps}_{\kappa\delta}, \text{ for }\kappa\delta\leq t< (\kappa+1)\delta
\end{align}

Recall from Theorem~\ref{T:LimitOnGraphb} that it is important to keep in mind that $Z^{\veps}_{t}$ does not converge to somewhere when $\veps\downarrow 0$. What converges to somewhere, i.e., to the diffusion on the tree, is $Q(Z^{\veps})=(U(Z^\veps),i(Z^{\veps}))$.
Then, we have the following theorem.

%So, the first thing what we should prove is that  $Q(\bar{Z}^{\veps})=(U(\bar{Z}^{\veps}),i(\bar{Z}^{\veps}))$ (where $\bar{Z}^{\veps}$ is the process from (\ref{Eq:HMMscheme})) converges to the diffusion on the tree if the parameters $\delta,\tau,\veps$ are appropriately chosen.

\begin{theorem}\label{T:ConvNumericalScheme}
Assume the conditions of Theorem \ref{T:LimitOnGraphb} and that $\veps,\delta,\tau\downarrow 0$ are such that $\frac{\delta\veps}{\tau}, \frac{\tau}{\veps},\left(\frac{\tau}{\veps}\right)^{3/2}\frac{1}{\delta}\downarrow 0$. Then, for $\tau<\delta<\frac{\tau}{\veps}\ll 1$ sufficiently small,  the process
$Q(\bar{Z}^{\veps}_{n\delta})=(U(\bar{Z}^{\veps}_{n\delta}),i(\bar{Z}^{\veps}_{n\delta}))$ (where $\bar{Z}^{\veps}$ is the process from (\ref{Eq:HMMscheme})) converges in distribution  to the process $Y_{\cdot}$ as defined in Definition \ref{Def:ProcessOnTree}. In addition, convergence to the invariant
measure $\mu$ of the $Y$ process holds, in the sense that for any bounded and uniformly Lipschitz test function $\phi$  we have that for all $t>0$
\begin{align}
  \lim_{h\downarrow 0}\lim_{\veps,\delta, \frac{\delta\veps}{\tau}, \frac{\tau}{\veps},\left(\frac{\tau}{\veps}\right)^{3/2}\frac{1}{\delta}\downarrow 0} \frac{1}{h}\int_{t}^{t+h}E_{\pi} \phi(\bar{Z}^{\veps}_{s})ds&= E_{\mu} \widehat{\phi}(Y_{t})\,, \nonumber%\label{Eq:F_convergence0}
\end{align}
where $\pi$ is the invariant measure of the continuous process $Z^{\veps}$.
\end{theorem}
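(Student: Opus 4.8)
The plan is to identify the limit through the martingale problem for the graph generator $(\mathcal{L}^{Y},\mathcal{D}(\mathcal{L}^{Y}))$ of Definition~\ref{Def:ProcessOnTree}. Concretely, I would (i) prove tightness of the laws of the piecewise-constant interpolation of $Q(\bar Z^{\veps}_{n\delta})$ in $\mathcal{C}([0,T];\Gamma)$, (ii) show that every subsequential limit solves the martingale problem for $\mathcal{L}^{Y}$, and (iii) invoke well-posedness of that martingale problem (the process $Y$ is a well-defined continuous strong Markov process, as recalled before \thmref{T:LimitOnGraphb}) to upgrade this to convergence in distribution of the full family to $Y$. Following the paper's outline, I would first treat the one-well case, where $\Gamma$ is a single edge and the limit is an ordinary one-dimensional diffusion, adapting the split-step analysis of \cite{TaoOwhadiMarsden2010}, and only afterwards glue across interior vertices.

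For the single-edge step I would fix $g=f\circ Q$ with $f\in\mathcal{D}(\mathcal{L}^{Y})$ and expand the one-macro-step increment $\mathbb{E}[\,g(\bar Z^{\veps}_{(n+1)\delta})-g(\bar Z^{\veps}_{n\delta})\mid \bar Z^{\veps}_{n\delta}=z\,]$ using the factorization $\Phi_{\delta-\tau}^{0}\circ\Phi_{\tau}^{1/\veps}$ of \eqref{Eq:HMMscheme} together with the local accuracy \eqref{Eq:Integrator1}. Because $C\cdot\nabla U=0$ and $\sigma\sigma^{T}\nabla U=0$ (so that $\widehat{\mathcal{L}}U=0$), the stiff micro step changes $U$ only through the slow drift $-\nabla U$ and the isotropic noise, at order $\tau$, while its dominant role is to transport the configuration along the regularized fast flow and redistribute it on the level component $d_i(x)$. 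The condition $\delta\veps/\tau\downarrow 0$, i.e. $\tau/(\delta\veps)\uparrow\infty$, means the cumulative fast time per unit macro time diverges, so the conditional law on $d_i(x)$ equilibrates to $\mu_{x,i}$ up to a residual governed by the spectral gap of $\widehat{\mathcal{L}}$. Replacing the time-averages of $\mathcal{L}_0U$ and $2\beta|\nabla U|^2$ by their $\mu_{x,i}$-averages $\widehat{\mathcal{L}_0U}(x,i)$ and $\widehat A(x,i)$ then yields $\tfrac{1}{\delta}\mathbb{E}[g(\bar Z^{\veps}_{(n+1)\delta})-g\mid z]=\mathcal{L}^{Y}_i f(x)+o(1)$ in the notation of \eqref{Eq:AveragedOperator}. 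Here the three hypotheses play complementary roles: $\delta\veps/\tau\downarrow 0$ controls the equilibration error, $\tau/\veps\downarrow 0$ is the stability/consistency condition for the explicit stiff Euler step, and $(\tau/\veps)^{3/2}/\delta\downarrow 0$ bounds the per-unit-time accumulation of the local truncation error $\sim(\tau/\veps)^{3/2}$ coming from \eqref{Eq:Integrator1}. Tightness I would obtain from uniform-in-parameter second moment bounds on $U(\bar Z^{\veps}_{n\delta})$, using $\widehat{\mathcal{L}}U=0$ and the confining behavior of $U$ in a Lyapunov/Foster estimate for the one-step operator, which also keeps the explicit stiff step from blowing up under $\tau/\veps\downarrow 0$.

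The main obstacle is the behavior at the interior vertices $O_j$, where the averaged coefficients degenerate ($T_i(x)$ blows up and $\widehat A\to 0$) and the single-edge averaging breaks down; this is exactly where the gluing condition $\sum_{i:I_i\sim O_j}\pm b_{ji}D_i g(O_j)=0$ must be produced. Following the Freidlin--Wentzell/Freidlin--Weber approach of \cite{FreidlinWentzell1993,FreidlinWeber2004,BrinFreidlin2000}, I would carry out a small-neighborhood analysis around each saddle: show that the occupation time of an $O(\rho)$-neighborhood of $O_j$ is negligible as $\rho\downarrow 0$, and that the probabilities of the slow variable leaving that neighborhood into the adjacent edges $I_i$ are asymptotically proportional to the weights $b_{ji}$, which is what forces the gluing condition on the limiting domain. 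The genuinely new difficulty, as the authors emphasize, is that these exit-probability and occupation-time estimates must now be established for the discrete scheme with explicit dependence on $\delta,\tau,\veps$ rather than for a continuous-time diffusion; I would obtain them by constructing discrete super/sub-solutions (barrier or Lyapunov functions) for the one-step transition operator near the saddle and summing the resulting increments over the $O(1/\delta)$ steps, verifying throughout that the remainder terms are dominated by the three scaling hypotheses. This uniform control of the entrance/exit dynamics at vertices in discrete time is where I expect essentially all of the work to lie.

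For the second assertion I would combine the finite-time convergence above with ergodicity. Let $\pi$ be the invariant measure of $Z^{\veps}$; since $Q(Z^{\veps})$ converges in distribution to $Y$ and both stationary measures are unique, the pushforward $Q_{\#}\pi$ converges to the invariant measure $\mu$ of $Y$. Starting the scheme from (approximate) stationarity, the inner limit $\veps,\delta,\delta\veps/\tau,\tau/\veps,(\tau/\veps)^{3/2}/\delta\downarrow 0$ turns $\tfrac{1}{h}\int_t^{t+h}E_\pi f(\bar Z^{\veps}_s)\,ds$ into $\tfrac{1}{h}\int_t^{t+h}E_\mu\widehat f(Y_s)\,ds$, using that averaging $f$ along the equilibrated fast flow replaces it by the conditional average $\widehat f$ of \eqref{Eq:AverageOnGraph}; invariance of $\mu$ for $Y$ then makes this equal to $E_\mu\widehat f(Y_t)$, so the outer limit $h\downarrow 0$ is immediate. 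The two technical points to discharge here are that the scheme's own invariant measure remains uniformly close to $\pi$, and a uniform-in-parameter ergodicity/moment bound justifying the interchange of the long-time and small-parameter limits implicit in sampling from $\pi$.
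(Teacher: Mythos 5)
Your overall architecture matches the paper's: a generator-convergence argument on a single edge (the paper invokes Theorem~1 of \cite[Chapter 2]{Skorokhod} rather than setting up tightness plus a martingale problem explicitly, but this is cosmetic), a Freidlin--Wentzell-style analysis of occupation times and exit probabilities near vertices to produce the gluing condition, with the new work being the explicit $(\delta,\tau,\veps)$-dependence of those estimates for the discrete scheme, and the invariant-measure statement deduced from weak convergence plus mixing and the identification of $\mu$ as the pushforward $Q_{\#}\pi$. Your reading of the roles of the three scaling hypotheses is also correct. The vertex analysis in the paper is done not with discrete barriers but by applying Lemma~\ref{L:TimeStepIntegrator} with $f(u)=u$, decomposing $U(\bar Z^{\veps}_{n\delta})$ into a drift sum plus a Gaussian, and using a lower bound on the Gaussian fluctuation together with the Markov property to get geometric decay of the exit-time tail; your barrier proposal is a comparable route.

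The genuine gap is in the mechanism you propose for the averaging step. You assert that because the cumulative fast time per unit macro time diverges, ``the conditional law on $d_i(x)$ equilibrates to $\mu_{x,i}$ up to a residual governed by the spectral gap of $\widehat{\mathcal{L}}$.'' This fails on two counts. First, the effective fast time in a \emph{single} macro step is $\tau/\veps$, which tends to zero under the standing hypotheses, so no per-step equilibration of the conditional law takes place; equilibration only occurs across a growing block of macro steps, during which the slow variable also moves, and one must control the interaction of the two. Second, in the admissible case $\kappa=0$ (Condition~\ref{A:Assumption2}) the fast dynamics $\dot z=C(z)$ is a measure-preserving flow with a unique invariant measure but no spectral gap or exponential mixing, so there is no ``residual governed by the spectral gap'' to speak of. The paper's actual device circumvents both problems: it compares the discrete chain over $n$ macro steps to the \emph{continuous} stiff process $Z^{\veps}$ over the effective fast time $n\tau$ (the terms $J_1^n,J_3^n$, bounded via (A.103)--(A.104) of \cite{TaoOwhadiMarsden2010} at the cost of a factor $e^{Cn\tau/\veps}$), performs the averaging on the continuous process by solving the Poisson equation $-\widehat{\mathcal{L}}u=g-\widehat g$ of \eqref{Eq:AuxilliaryPDE} and applying It\^o's formula (which needs only boundedness of $u$ and its derivatives, not a spectral gap), giving $|J_2^n|\le C(\veps/(n\tau)+\veps)$, and then balances the exponential growth against the equilibration rate by choosing an intermediate block length $n$ with $\sqrt{n\tau/\veps}\,e^{Cn\tau/\veps}\sim(\tau/(\veps\delta))^{1/4}$. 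This two-scale block choice, and the corrector argument replacing your spectral-gap claim, are the missing ideas without which your single-edge step does not close.
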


The proof of this theorem is done in two steps. In the first step, in Section \ref{S:OneWellCase}, we consider the case of a single well. Then, in the second step in Section \ref{S:MultiWellCase}, we complete the proof by considering the general multiple well case.

\subsection{The case of one well}\label{S:OneWellCase}
Let us assume that there is only one well, i.e., that for any $T>0$ and $s\in[0,T]$, we have $i(\bar{Z}^{\veps}_{s})=1$. In this case, we simply have $Q(\bar{Z}^{\veps}_{\cdot})=(U(\bar{Z}^{\veps}_{\cdot}),1)$ and we are interested in the asymptotic behavior of the process $Q_{s}=U(\bar{Z}^{\veps}_{s})$. Going back to (\ref{Eq:Integrator1}) we have the following lemma.
\begin{lemma}\label{L:U_intergator2}
  Consider $z$ such that $|\nabla U(z)|\leq C<\infty$. Let us define
  $\Psi_{h}^{\alpha}(z)=U(\Phi_{h}^{\alpha}(z))$. Then, there exists
  $h_{0}<\infty$ such that for all $0< h\leq h_{0}$ and $\alpha = 0$
  or $\alpha=\frac{1}{\veps}$, one has the estimate
\begin{multline}
  \left(\mathbb{E}\left|\Psi_{h}^{\alpha}(z)-U(z)-h\left(- |\nabla U(z)|^{2}+\beta \text{tr}\left[D^{2}U(z)\right]\right)-\sqrt{h}\sqrt{2\beta}\nabla U(z)\cdot \xi\right|^{2}\right)^{1/2} \leq \\
  \leq C h^{3/2}(1+\alpha)^{3/2}\,,\nonumber
%  \label{Eq:Integrator2}
\end{multline}
where $\xi$ is a standard multidimensional normal random variable.
\end{lemma}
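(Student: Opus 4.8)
The plan is to Taylor-expand $\Psi_h^\alpha(z)=U(\Phi_h^\alpha(z))$ around $z$ and to use the structural identities $C\cdot\nabla U=0$, $\sigma\sigma^T\nabla U=0$ and $\widehat{\mathcal L}U=0$ to show that the potentially large contributions coming from the fast drift and the regularizing noise cancel, leaving exactly the target increment of the $U$-dynamics. I would write the one-step integrator from \eqref{Eq:Integrator1} as $\Phi_h^\alpha(z)=z+\eta+R$, where $\eta:=-h\nabla U(z)+\alpha h\,\wt C(z)+\sqrt{2\beta h}\,\xi+\sqrt{\kappa\alpha h}\,\sigma(z)\xi'$ is the explicit Euler--Maruyama increment and $R$ is the remainder, which by \eqref{Eq:Integrator1} satisfies $\bigl(\mathbb E|R|^2\bigr)^{1/2}\le Ch^{3/2}(1+\alpha)^{3/2}$. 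Since $U\in\mathcal C^{2+a}$, a second-order Taylor expansion with Hölder-controlled remainder gives $\Psi_h^\alpha(z)-U(z)=\nabla U(z)\cdot(\eta+R)+\tfrac12(\eta+R)^TD^2U(z)(\eta+R)+\rho$, with $|\rho|\le C|\eta+R|^{2+a}$.

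For the first-order part I would use the two pointwise identities. First, $C\cdot\nabla U=0$ (Condition \ref{A:Assumption3}) kills the fast drift at this order; second, from $\sigma\sigma^T\nabla U=0$ (Condition \ref{A:Assumption4}) one gets $|\sigma^T\nabla U|^2=\nabla U^T\sigma\sigma^T\nabla U=0$, hence $\sigma^T\nabla U=0$, so the regularizing noise does not contribute to $\nabla U\cdot\eta$ either. What survives is exactly the $-h|\nabla U|^2$ part of the target drift together with the target noise $\sqrt{2\beta h}\,\nabla U\cdot\xi$, while $\nabla U\cdot R$ is bounded in $L^2$ by $Ch^{3/2}(1+\alpha)^{3/2}$ because $|\nabla U(z)|\le C$ by hypothesis.

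The second-order part is where the key cancellation occurs. Expanding $\tfrac12\eta^TD^2U\eta$, the noise--noise term $\beta h\,\xi^TD^2U\xi$ has mean $\beta h\,\tr[D^2U]$, which supplies the remaining piece of the target drift. The only contributions that are not manifestly $O(h^{3/2}(1+\alpha)^{3/2})$ are those generated by the regularizing noise, namely $\tfrac{\kappa\alpha h}{2}\,(\xi')^T\sigma^TD^2U\,\sigma\,\xi'$ together with the first-order fast-drift term $\alpha h\,\wt C\cdot\nabla U$. Here I would invoke the defining property of $\wt C$: since $\widehat{\mathcal L}U=\wt C\cdot\nabla U+\tfrac\kappa2\tr[\sigma\sigma^TD^2U]=0$ (established in Section \ref{sec:averaging}), the $O(\alpha h)$ deterministic contribution $\alpha h\,\wt C\cdot\nabla U$ cancels exactly against the mean $\tfrac{\kappa\alpha h}{2}\tr[\sigma\sigma^TD^2U]$, and the centered fluctuation that remains is controlled using the smallness $\alpha h=\tau/\veps\downarrow0$ assumed in Theorem \ref{T:ConvNumericalScheme}. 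The remaining cross terms ($\eta$ with $R$, the drift against the noises, the two independent noises against each other, and the cubic Hölder remainder $\rho$) are each estimated in $L^2$ by Cauchy--Schwarz together with the Gaussian moment bound $\bigl(\mathbb E|\eta|^p\bigr)^{1/p}\le C\sqrt h(1+\alpha)^{1/2}$ and $\|R\|_{L^2}$, yielding the asserted $Ch^{3/2}(1+\alpha)^{3/2}$.

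I expect the main obstacle to be the honest $L^2$ control of the terms produced by the regularizing noise at order $\alpha h$, i.e.\ the pieces involving $\sigma\sigma^T$ and the mixed Gaussian $\xi^TD^2U\sigma\xi'$: these are individually of borderline size, so the estimate genuinely relies on combining the algebraic cancellation $\widehat{\mathcal L}U=0$ with the regime $h\alpha\ll1$ rather than on brute force. A secondary technical point is that the quadratic and cross terms require moment control of $R$ beyond the bare $L^2$ bound of \eqref{Eq:Integrator1} (for instance an $L^4$ estimate), and that the remainder $\rho$ must be handled through $\mathbb E|\eta+R|^{2+a}$ rather than a clean $|\eta|^3$ bound, since $D^2U$ is only $a$-Hölder continuous.
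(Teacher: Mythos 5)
Your proposal follows essentially the same route as the paper: a second-order Taylor expansion of $U$ along the one-step map $\Phi_h^{\alpha}$, combined with the identities $C\cdot\nabla U=0$ and $\sigma\sigma^{T}\nabla U=0$ to isolate the target drift and noise. The paper's own proof is terser — it simply absorbs all the terms you track individually (including the $O(\alpha h)$ contributions from $\wt{C}$ and the regularizing noise, whose means cancel via $\widehat{\mathcal{L}}U=0$) into a single remainder declared to be $o(h^{3/2}(1+\alpha)^{3/2})$ — so your explicit handling of that cancellation, of the higher moments of the remainder $R$, and of the H\"older-controlled Taylor error is a more careful rendering of the same argument rather than a different one.
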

\begin{proof}
By applying Taylor expansion to $U(\Phi_{h}^{\alpha}(z))$ up to second order with respect to $0<h\ll h_{0}$ and using (\ref{Eq:Integrator1}) we get for $h$ sufficiently small
\begin{align}
U(\Phi_{h}^{\alpha}(z))&=U(z)+\nabla U(z)\left(\Phi_{h}^{\alpha}(z)-z\right)+\frac{1}{2}\left(\Phi_{h}^{\alpha}(z)-z\right)^{T}D^{2}U(z)\left(\Phi_{h}^{\alpha}(z)-z\right)+ o((\Phi_{h}^{\alpha}(z)-z)^{2}) \nonumber\\
&=U(z)+ \nabla U(z)\left(h [-\nabla U(z)+\alpha  \wt{C}(z)]+\sqrt{h}\sqrt{2\beta}\xi+\sqrt{h}\sqrt{\kappa\alpha}\sigma(z)\xi' +I(\alpha,h)\right)\nonumber\\
&\quad+ \frac{1}{2}\left(h [-\nabla U(z)+\alpha  \wt{C}(z)]+\sqrt{h}\sqrt{2\beta}\xi+\sqrt{h}\sqrt{\kappa\alpha}\sigma(z)\xi'+I(\alpha,h)\right)^{T}D^{2}U(z)\times\nonumber\\
&\quad\qquad\times\left(h [-\nabla U(z)+\alpha  \wt{C}(z)]+\sqrt{h}\sqrt{2\beta}\xi+\sqrt{h}\sqrt{\kappa\alpha}\sigma(z)\xi'+I(\alpha,h)\right)+ o((\Phi_{h}^{\alpha}(z)-z)^{2}),\nonumber
\end{align}
where $\left(\EE I^{2}(\alpha,h)\right)^{1/2}\leq C h^{3/2}(1+\alpha)^{3/2}$. Using now the assumptions from Conditions~\ref{A:Assumption3} and \ref{A:Assumption4} that $\nabla U(z)C(z)=0$, $\sigma(z)\sigma^{T}(z)\nabla U(z)=0$ and expanding the quadratic term, the previous expression simplifies to
\begin{align*}
U(\Phi_{h}^{\alpha}(z))
&=U(z)+ \left(h \left[-|\nabla U(z)|^{2}+\beta \text{tr}\left[D^{2}U(z)\right]\right]\right)+\sqrt{h}\sqrt{2\beta}\nabla U(z)\xi+ \nabla U(z)I(\alpha,h)+R_{1}(\alpha,h),
\end{align*}
where $\left(\EE R^{2}_{1}(\alpha,h)\right)^{1/2}=o( h^{3/2}(1+\alpha)^{3/2})$. The latter, essentially concludes the proof of the lemma.
\end{proof}

For notational convenience, let us define the operator on test functions $f\in\mathcal{C}^{2}(\mathbb{R})$,
\begin{align}
\mathcal{L}_{Q}f(z)&=\left[-|\nabla U(z)|^{2}+\beta \text{tr}\left[D^{2}U(z)\right]\right]f^{'}(U(z))+\beta |\nabla U(z)|^{2} f^{''}(U(z))\label{Eq:LqOperator}
\end{align}

Next we have the following lemma for the numerical approximation HMM scheme (\ref{Eq:HMMscheme}).
\begin{lemma}\label{L:TimeStepIntegrator}
Let $f\in\mathcal{C}^{2}(\mathbb{R})$. Then for $\bar{Z}^{\veps}_{t}$ given by (\ref{Eq:HMMscheme}) we have, as $\tau<\delta<\tau/\veps$, $\delta,\frac{\tau}{\veps}\downarrow 0$
\begin{align}
&\EE\left(f(U(\bar{Z}^{\veps}_{(n+1)\delta}))-f(U(\bar{Z}^{\veps}_{n\delta}))\right)=\delta \EE\mathcal{L}_{Q}f(\bar{Z}^{\veps}_{n\delta})+O\left(\delta^{3/2}+\left(\frac{\tau}{\veps}\right)^{3/2}\right)\,.\nonumber
\end{align}
\end{lemma}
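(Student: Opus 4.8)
The plan is to reduce the macro-step increment to a one-step estimate for each of the two substeps and then chain them using the Markov property. First I would turn Lemma~\ref{L:U_intergator2} into a statement about the test function $f\circ U$. Writing $\Delta=\Psi_h^\alpha(z)-U(z)$ and Taylor expanding $f$ about $U(z)$,
\[
f(\Psi_h^\alpha(z))-f(U(z))=f'(U(z))\,\Delta+\tfrac12 f''(U(z))\,\Delta^2+\mathrm{Rem},
\]
I would read off the first two moments of $\Delta$ from Lemma~\ref{L:U_intergator2}: since the Gaussian term is mean zero, $\mathbb{E}[\Delta]=h\big(-|\nabla U|^2+\beta\tr D^2U\big)+O\big(h^{3/2}(1+\alpha)^{3/2}\big)$, while using $\mathbb{E}[(\nabla U\cdot\xi)^2]=|\nabla U|^2$ and Cauchy--Schwarz on the cross terms with the $O(h^{3/2}(1+\alpha)^{3/2})$ residual gives $\mathbb{E}[\Delta^2]=2\beta h|\nabla U|^2+\text{l.o.t.}$ Assembling $f'(U)\mathbb{E}[\Delta]+\tfrac12 f''(U)\mathbb{E}[\Delta^2]$ recovers exactly $\mathcal{L}_Q f(z)$, so that for deterministic $z$ with $|\nabla U(z)|$ bounded,
\[
\mathbb{E}\big[f(U(\Phi_h^\alpha(z)))-f(U(z))\big]=h\,\mathcal{L}_Q f(z)+O\big(h^{3/2}(1+\alpha)^{3/2}\big).
\]

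Next I would apply this one-step estimate with the two parameter choices and chain. Conditioning on $z=\bar Z^\veps_{n\delta}$ and writing $w=\Phi_\tau^{1/\veps}(z)$,
\[
f(U(\bar Z^\veps_{(n+1)\delta}))-f(U(z))=\big[f(U(\Phi_{\delta-\tau}^0(w)))-f(U(w))\big]+\big[f(U(w))-f(U(z))\big].
\]
The second bracket is the inner substep ($h=\tau$, $\alpha=1/\veps$), contributing $\tau\mathcal{L}_Q f(z)+O((\tau/\veps)^{3/2})$, where I use $(1+1/\veps)^{3/2}\le 2^{3/2}\veps^{-3/2}$ so that $\tau^{3/2}(1+1/\veps)^{3/2}=O((\tau/\veps)^{3/2})$. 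The first bracket is the outer substep ($h=\delta-\tau$, $\alpha=0$, evaluated at $w$), contributing $(\delta-\tau)\mathcal{L}_Q f(w)+O(\delta^{3/2})$. The drift terms combine as $\tau\mathcal{L}_Q f(z)+(\delta-\tau)\mathbb{E}[\mathcal{L}_Q f(w)\mid z]=\delta\mathcal{L}_Q f(z)+(\delta-\tau)\mathbb{E}[\mathcal{L}_Q f(w)-\mathcal{L}_Q f(z)\mid z]$, isolating the desired leading term $\delta\mathcal{L}_Q f(z)$.

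The main obstacle is this final composition error: the outer substep is evaluated at the random, displaced point $w$ rather than at $z$. Here I would first bound $\mathbb{E}|w-z|=O(\sqrt{\tau/\veps})$ from \eqref{Eq:Integrator1}, noting that the drift contributes $O(\tau/\veps)$ and the stiff regularizing noise $\sqrt{\kappa/\veps}\,\sigma$ contributes $O(\sqrt{\tau/\veps})$, both dominated by $\sqrt{\tau/\veps}$ since $\tau/\veps\downarrow 0$ and $\veps<1$. Then, using that $\mathcal{L}_Q f$ is Lipschitz on the relevant region (which follows from $U\in\mathcal{C}^{2+a}$ together with sufficient smoothness of $f$), I obtain $(\delta-\tau)\,\mathbb{E}|\mathcal{L}_Q f(w)-\mathcal{L}_Q f(z)|=O(\delta\sqrt{\tau/\veps})$, and I would absorb this into the stated error by Young's inequality with exponents $(3/2,3)$, giving $\delta\sqrt{\tau/\veps}\le\tfrac23\delta^{3/2}+\tfrac13(\tau/\veps)^{3/2}$. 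Taking the outer expectation over $\bar Z^\veps_{n\delta}$ then yields the claim. Two technical points deserve care: the Taylor remainder $\mathrm{Rem}$, for which the clean $h^{3/2}$ rate wants $f''$ Lipschitz (so that a third-order expansion controls $\mathbb{E}|\Delta|^3=O(h^{3/2}(1+\alpha)^{3/2})$) rather than mere continuity of $f''$; and the uniformity of the one-step constants at the random point $w$, which relies on a priori control of $|\nabla U|$ along the scheme.
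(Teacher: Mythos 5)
Your proposal follows essentially the same route as the paper's proof: a one-step estimate for $f\circ U$ obtained by Taylor expansion from Lemma~\ref{L:U_intergator2}, chaining of the two substeps via the Markov property, and control of the composition error $(\delta-\tau)\,\EE\bigl[\mathcal{L}_Q f(\bar Z^{\veps}_{n\delta+\tau})-\mathcal{L}_Q f(\bar Z^{\veps}_{n\delta})\bigr]$ through a Lipschitz bound on $\mathcal{L}_Q f$ and the one-step displacement, absorbed into $\delta^{3/2}+(\tau/\veps)^{3/2}$ by Young's inequality. You are in fact slightly more careful than the paper in two spots --- tracking the $O(\sqrt{\tau/\veps})$ contribution of the regularizing noise to the displacement (the paper writes $\sqrt{\tau}+\tau/\veps$) and noting that the clean $h^{3/2}$ Taylor remainder really wants $f''$ Lipschitz and a priori control of $|\nabla U|$ along the scheme --- but these refinements do not change the argument.
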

\begin{proof}
We start by noticing that Lemma \ref{L:U_intergator2} implies that
\begin{align}
U(\bar{Z}^{\veps}_{n\delta+\tau})&=U(\bar{Z}^{\veps}_{n\delta})+\tau\left(- |\nabla U(\bar{Z}^{\veps}_{n\delta})|^{2}+\beta \text{tr}\left[D^{2}U(\bar{Z}^{\veps}_{n\delta})\right]\right)+\sqrt{\tau}\sqrt{2\beta}\nabla U(\bar{Z}^{\veps}_{n\delta})\xi_{n}+R_{2,n},\nonumber
\end{align}
where $\left(\EE R_{2,n}^{2}\right)^{1/2}\leq C \left(\frac{\tau}{\veps}\right)^{3/2}$. The last display and smoothness of the test function $f$, implies that
\begin{align}
\EE f(U(\bar{Z}^{\veps}_{n\delta+\tau}))&=\EE f(U(\bar{Z}^{\veps}_{n\delta}))+\tau \EE\mathcal{L}_{Q}f(\bar{Z}^{\veps}_{n\delta})+\EE R_{2,n},\nonumber
\end{align}
where with some abuse of notation we still denote $R_{2,n}$ the error
term which again satisfies
$\left(\EE R_{2,n}^{2}\right)^{1/2}\leq C
\left(\frac{\tau}{\veps}\right)^{3/2}$.

In a similar manner, we also obtain that
\begin{align}
\EE f(U(\bar{Z}^{\veps}_{(n+1)\delta}))&=\EE f(U(\bar{Z}^{\veps}_{n\delta+\tau}))+(\delta-\tau) \EE\mathcal{L}_{Q}f(\bar{Z}^{\veps}_{n\delta+\tau})+ \EE R_{3,n},\nonumber
\end{align}
where $\left(\EE R^{2}_{3,n}\right)^{1/2}\leq C (\delta-\tau)^{3/2}$.

Hence, we get
\begin{align}
\EE\left(f(U(\bar{Z}^{\veps}_{(n+1)\delta}))-f(U(\bar{Z}^{\veps}_{n\delta}))\right)&=\delta \EE\mathcal{L}_{Q}f(\bar{Z}^{\veps}_{n\delta})\nonumber\\
&+(\delta-\tau)\EE\left(\mathcal{L}_{Q}f(\bar{Z}^{\veps}_{n\delta})-\mathcal{L}_{Q}f(\bar{Z}^{\veps}_{n\delta+\tau})\right)+\EE R_{2,n}+\EE R_{3,n}.\nonumber
\end{align}

We further notice that by the regularity of $U$ and $f$ we have
\begin{equation*}
\begin{aligned}
\left(\EE\left(\mathcal{L}_{Q}f(\bar{Z}^{\veps}_{n\delta})-\mathcal{L}_{Q}f(\bar{Z}^{\veps}_{n\delta+\tau})\right)^{2}\right)^{1/2}\leq C \left(\EE\left|\bar{Z}^{\veps}_{n\delta+\tau}-\bar{Z}^{\veps}_{n\delta}\right|^{2}\right)^{1/2}&\leq C(\sqrt{\tau}+\tau/\veps)\\
  &\leq C(\sqrt{\delta}+\tau/\veps).
\end{aligned}
\end{equation*}

Putting the estimates together we obtain the statement of the lemma.
\end{proof}

Let us recall now the operator $\mathcal{L}_{Q}f(z)$ defined by (\ref{Eq:LqOperator}) and let us recall the ``averaged'' generator $\mc{L}^Y=\mc{L}^Y_1$ defined via (\ref{Eq:AveragedOperator}) (recall that the single edge case is considered at the moment). We want to prove that the process $U(\bar{Z}^{\veps}_{n\delta})$ converges in distribution to the process with generator $\mc{L}^{Y}$.

For this purpose, we may use  Theorem 1 of  \cite[Chapter 2]{Skorokhod}. By Lemma \ref{L:TimeStepIntegrator} we have
\begin{equation}
\begin{aligned}
&\EE\left[f(U(\bar{Z}^{\veps}_{n\delta}))-f(U(z))-\int_{0}^{n\delta}\mathcal{L}^{Y}f(U(\bar{Z}^{\veps}_{s}))ds\right]=\\
&\quad =\EE\sum_{k=0}^{n-1}\left[f(U(\bar{Z}^{\veps}_{(k+1)\delta}))-f(U(\bar{Z}^{\veps}_{k\delta}))-\int_{k\delta}^{(k+1)\delta}\mathcal{L}^{Y}f(U(\bar{Z}^{\veps}_{s}))ds\right]\\
&\quad =\delta \sum_{k=0}^{n-1} \EE\left[\mathcal{L}_{Q}f(\bar{Z}^{\veps}_{k\delta})-\mathcal{L}^{Y}f(U(\bar{Z}^{\veps}_{k\delta}))\right]+ n\delta \EE I_{0}\\
& \quad = \EE\int_{0}^{n\delta}\left[\mathcal{L}_{Q}f(\bar{Z}^{\veps}_{s})-\mathcal{L}^{Y}f(U(\bar{Z}^{\veps}_{s}))\right]ds+ n\delta \EE I_{0}, \label{Eq:MartingaleProblem_ApproximatingSequence}
\end{aligned}
\end{equation}
where $\left(\EE I_{0}^{2}\right)^{1/2}\leq C\left(\delta^{3/2}+\left(\frac{\tau}{\veps}\right)^{3/2}\right)$.

Notice that $\mathcal{L}^{Y}f(U(z))=\widehat{\mathcal{L}_{Q}f}(U(z))$. Essentially, for a nice function $g = \mc{L}_Q f$, we need tight estimates for $\EE\int_{0}^{n\delta}\left[g(\bar{Z}^{\veps}_{s})-\widehat{g}(U(\bar{Z}^{\veps}_{s}))\right]ds$, where we recall that $\bar{Z}^{\veps}_{s}$ is the approximating process and $\widehat{g}$ is the average on graph as defined in \eqref{Eq:AverageOnGraph}. We can write
\begin{equation}
\begin{aligned}
\EE\int_{0}^{n\delta}\left[\mathcal{L}_{Q}f(\bar{Z}^{\veps}_{s})-\mathcal{L}^{Y}f(U(\bar{Z}^{\veps}_{s}))\right]ds&=n \delta\left[\frac{1}{n}\sum_{k=0}^{n-1}\left(\EE\mathcal{L}_{Q}f(\bar{Z}^{\veps}_{k\delta})-\frac{1}{\tau}\int_{k\tau}^{(k+1)\tau}\EE\mathcal{L}_{Q}f(Z^{\veps}_{s})ds\right)\right]\nonumber\\
&+n \delta\left[\frac{1}{n\tau}\int_{0}^{n\tau}\EE\mathcal{L}_{Q}f(Z^{\veps}_{s})ds-\frac{1}{n\tau}\int_{0}^{n\tau}\EE\mathcal{L}^{Y}f(U(Z^{\veps}_{s}))ds\right]\nonumber\\
&+n \delta\left[\frac{1}{n}\sum_{k=0}^{n-1}\left(\int_{k\tau}^{(k+1)\tau}\EE\mathcal{L}^{Y}f(U(Z^{\veps}_{s}))ds-\EE\mathcal{L}^{Y}f(U(\bar{Z}^{\veps}_{k\delta}))\right)\right]\nonumber\\
&=n\delta\left[J_{1}^{n}+J_{2}^{n}+J_{3}^{n}\right].\label{Eq:MartingaleProblem_ApproximatingSequence2}
\end{aligned}
\end{equation}

By the estimate (A.103)-(A.104) of \cite{TaoOwhadiMarsden2010} we have that for an unimportant constant $C<\infty$
\begin{align}
|J_{1}^{n}+J_{3}^{n}|&\leq C \left(\sqrt{\frac{\tau}{\veps}}+\sqrt{n\delta}+n\delta+n\left(\frac{\tau}{\veps}\right)^{3/2}\right)e^{C\frac{n\tau}{\veps}}.\label{Eq:J1J3bounds}
\end{align}

It remains to treat the term $J_{2}^{n}=\frac{1}{n\tau}\int_{0}^{n\tau}\EE\mathcal{L}_{Q}f(Z^{\veps}_{s})ds-\frac{1}{n\tau}\int_{0}^{n\tau}\EE\mathcal{L}^{Y}f(U(Z^{\veps}_{s}))ds$. Standard PDE arguments, e.g., \cite[Section 3.2]{FreidlinWeber2004}, show that for any point $(z,i)\in I_{1}$ that is not a vertex, and for $g\in\mathcal{C}^{2+\alpha}$, the PDE
\begin{equation}
-\widehat{\mathcal{L}}u(z)=g(z)-\widehat{g}(U(z)), \quad\text{for }z\in d_{1}(x),\label{Eq:AuxilliaryPDE}
\end{equation}
has a unique (up to constants) $\mathcal{C}^{2+\alpha'}$ solution with
$\alpha'\in(0,\alpha)$. We fix the free constant by setting
$\widehat{u}(x,1)=0$. Then, the solution $u(z)$ can be written as
\[
u(z)=\int_{0}^{\infty}\mathbb{E}_{z}\left[g(\widehat{Z}_{s})-\widehat{g}(U(\widehat{Z}_{s}))\right]ds.
\]

Moreover, there exist a constant $\lambda=\lambda(z,1)>0$ such that for $z\in d_{1}(x)$,
\begin{equation*}
|u(z)|\leq \frac{2}{\lambda}\sup_{z\in d_{1}(x)}\bigl\lvert g(z)-\widehat{g}(U(z))\bigr\rvert.\label{Eq:BoundAuxilliaryPDE}
\end{equation*}

Notice that in the case that we can take $\kappa=0$, \textit{i.e.}, when the dynamical system $\dot{z}_{t}=C(z_{t})$ has a unique invariant measure on the connected component $d_{1}(x)$, then we simply have $\widehat{\mathcal{L}}u(z)=C(z)\nabla u(z)$. If we cannot take $\kappa=0$, then $\widehat{\mathcal{L}}u(z)$ is given by (\ref{Eq:FastOperator}).
Applying It\^{o}'s formula to $u$ we obtain that
\begin{align}
u(Z^{\veps}_{n\tau})&=u(z)+\frac{1}{\veps}\int_{0}^{n\tau}\widehat{\mathcal{L}} u(Z^{\veps}_{s})ds+\int_{0}^{n\tau}\mathcal{L}_{0} u(Z^{\veps}_{s})ds\nonumber\\
&+\sqrt{\frac{\kappa}{\veps}}\int_{0}^{n\delta}\nabla u(Z^{\veps}_{s})\sigma(Z^{\veps}_{s})dW^{o}_{s}+\sqrt{2\beta}\int_{0}^{n\delta}\nabla u(Z^{\veps}_{s})dW_{s}.\nonumber
\end{align}

Recalling now that $u$ solves (\ref{Eq:AuxilliaryPDE}), we obtain by rearranging the last display and taking expected value
\begin{align}
 J_{2}^{n}&=\frac{1}{n\tau}\int_{0}^{n\tau}\EE\mathcal{L}_{Q}f(Z^{\veps}_{s})ds-\frac{1}{n\tau}\int_{0}^{n\tau}\EE\mathcal{L}^{Y}f(U(Z^{\veps}_{s}))ds\nonumber\\
&=\frac{\veps}{n\tau}\left[\EE\left(u(Z^{\veps}_{n\tau})-u(z)\right)+\EE\int_{0}^{n\tau}\mathcal{L}_{0} u(Z^{\veps}_{s})ds\right],\nonumber
\end{align}
which then, due to the boundedness of $u$ and its derivatives, gives
\begin{align}
 |J_{2}^{n}|&\leq C \left(\frac{\veps}{n\tau}+\veps\right).\label{Eq:J2bound}
\end{align}

Thus, using estimates (\ref{Eq:J1J3bounds})-(\ref{Eq:J2bound}), (\ref{Eq:MartingaleProblem_ApproximatingSequence}) gives
\begin{align}
&\left|\EE\left[f(U(\bar{Z}^{\veps}_{n\delta}))-f(U(z))-\int_{0}^{n\delta}\mathcal{L}^{Y}f(U(\bar{Z}^{\veps}_{s}))ds\right]\right|\leq \nonumber\\
&\qquad \leq C n\delta\left[\left(\sqrt{\frac{\tau}{\veps}}+\sqrt{n\delta}+n\delta+n\left(\frac{\tau}{\veps}\right)^{3/2}\right)e^{C\frac{n\tau}{\veps}}+\frac{\veps}{n\tau}+\veps+\delta^{3/2}+\left(\frac{\tau}{\veps}\right)^{3/2}\right].\label{Eq:MartingaleProblem_ApproximatingSequence3}
\end{align}

Choosing now $n$ such that $\sqrt{\frac{n\tau}{\veps}}e^{C\frac{n\tau}{\veps}}\sim\left(\frac{\tau}{\veps\delta}\right)^{1/4}$, and recalling the requirement $\tau<\delta<\frac{\tau}{\veps}\ll 1$ we obtain from (\ref{Eq:MartingaleProblem_ApproximatingSequence3})
\begin{align}
&\frac{1}{n\delta}\left|\EE\left[f(U(\bar{Z}^{\veps}_{n\delta}))-f(U(z))-\int_{0}^{n\delta}\mathcal{L}^{Y}f(U(\bar{Z}^{\veps}_{s}))ds\right]\right|\leq \nonumber\\
&\qquad\leq C \left[\left(\frac{\delta\veps}{\tau}\right)^{1/4}+\left(\frac{\delta\veps}{\tau}\right)^{1/2}+\left(\frac{\tau}{\veps}\right)^{3/2}\frac{1}{\delta}\sqrt{\frac{\delta\veps}{\tau}}+\frac{1}{\log\left(\frac{\tau}{\veps\delta}\right)}+\frac{\tau}{\delta}+\delta^{3/2}+\left(\frac{\tau}{\veps}\right)^{3/2}\right]\nonumber\\
&\qquad \rightarrow 0, \text{ as }\frac{\delta\veps}{\tau}, \left(\frac{\tau}{\veps}\right)^{3/2}\frac{1}{\delta}\downarrow 0.\nonumber%\label{Eq:MartingaleProblem_ApproximatingSequence4}
\end{align}

Hence, by  Theorem 1 of  \cite[Chapter 2]{Skorokhod}, we have obtained that $U(\bar{Z}^{\veps}_{n\delta})$ converges in distribution to the process $Y$ on the graph (for the moment with just one edge) with generator $\mathcal{L}^{Y}$.

Let us next discuss convergence to the invariant measure. Since the
invariant measure for the original process $Z^{\veps}$ is the Gibbs
measure $\pi$, we get that the invariant measure for the process Y on
the tree $\Gamma$ is nothing else but the projection of $\pi$ on
$\Gamma$, say $\mu$. In particular for any Borel set $\gamma\subset
\Gamma$, we have $\mu(\gamma)=\pi(\Gamma^{-1}(\gamma))$.

Then, from the weak convergence of $Q(\bar{Z}^{\veps}_{n\delta})$ to the process $Y$ and the uniform mixing properties of $Z^{\veps}_{t}$ and $Y_{t}$, we get that for any bounded and uniformly Lipschitz test function $\phi$  that for all $t>0$
\begin{align}
\lim_{h\downarrow 0}\lim_{\veps,\delta, \frac{\delta\veps}{\tau}, \frac{\tau}{\veps},\left(\frac{\tau}{\veps}\right)^{3/2}\frac{1}{\delta}\downarrow 0} \frac{1}{h}\int_{t}^{t+h}E_{\pi} \phi(\bar{Z}^{\veps}_{s})ds&=
\lim_{h\downarrow 0}\lim_{\veps,\delta, \frac{\delta\veps}{\tau}, \frac{\tau}{\veps},\left(\frac{\tau}{\veps}\right)^{3/2}\frac{1}{\delta}\downarrow 0} \frac{1}{h}\int_{t}^{t+h}E_{\pi} \widehat{\phi}(Q(\bar{Z}^{\veps}_{s}))ds\nonumber\\
 &=E_{\mu} \widehat{\phi}(Y_{t}). \label{Eq:F_convergence1}
\end{align}
The latter establishes  Theorem \ref{T:ConvNumericalScheme} in the one well case.
\subsection{The multi-well case}\label{S:MultiWellCase}

The goal of this section is to establish that Theorem \ref{T:ConvNumericalScheme} holds in the general multi-well case, i.e., when $m>1$. First we need to define certain objects. Let us consider $\theta>0$ small and for an edge $I_{i}$ of the graph set
\[
 I^{\theta}_{i}=\{(x,i)\in I_{i}: \text{dist}((x,i),\partial I_{i})>\theta\}
\]
and define
\[
 \bar{\tau}_{i}=\inf\{t>0: Q(\bar{Z}^{\veps}_{t})\notin I^{\theta}_{i}\} \,.
\]
Thus, $I^{\theta}_i$ is the interior part of the edge $I_i$ and
$\bar{\tau}_i$ is the first exit time of the interior part.

In addition, for $\zeta>0$ and for a vertex of the graph $O_{j}$ and a segment $I_i \sim O_j$, let us define the following quantities as in \cite[Chapter 8]{FreidlinWentzell88}:
\begin{align}
  D_i &= \left\{z \in E: Q(z) \in I_i^{\circ} \right\} \nonumber \\
  D_{i}(U_{1},U_{2})&=\left\{z\in D_{i}: U_{1}<U(z)<U_{2}\right\}\nonumber\\
  D_{j}(\pm\zeta)&=\left\{z\in E: U(O_{j})-\zeta<U(z)<U(O_{j})+\zeta\right\}\,\nonumber\\
  D(\pm\zeta)&=\bigcup_{j}D_{j}(\pm\zeta) \nonumber\\
  C_{j}&=\left\{z\in E: Q(z)=O_{j}\right\}\nonumber \\
  C_{ji}(\zeta)&=\left\{z\in D_{i}: U(z)=U(O_{j})\pm\zeta\right\}\,\nonumber\\
  C_{ji}&=C_{j}\bigcap \partial D_{i}\nonumber\\
  C_{i}(U)&=\left\{z\in \bar{D}_{i}: U(z)=U\right\}.\nonumber
\end{align}

Here $I_i^{\circ}$ denotes the open interior of $I_i$.  Let us then
also define the first exit time of the process $\bar{Z}^{\veps}_{t}$
from $D_{j}(\pm\zeta)$ as follows
\[
 \bar{\tau}^{\veps}_{j}(\pm\zeta)=\inf\{t>0: \bar{Z}^{\veps}_{t}\notin D_{j}(\pm\zeta)\} \,.
\]
Following the proof of \cite[Theorem 8.2.2]{FreidlinWentzell88}, see
also \cite{FW2}, the statement of Theorem \ref{T:ConvNumericalScheme}
will follow if we show that in the limit as
$\veps,\delta,\tau\downarrow 0$ the process $\bar{Z}^{\veps}_{\cdot}$
behaves within a given $i$ well according to the generator
$\mathcal{L}^{Y}_{i}$, it spends zero time in exterior and interior
vertices and that the probabilistic behavior at the vertices leads to
the gluing condition of Definition \ref{Def:ProcessOnTree}. To be
precise, following the proof of \cite[Theorem
8.2.2]{FreidlinWentzell88}, Theorem \ref{T:ConvNumericalScheme}
follows if we prove Lemmas \ref{L:ConvergenceWithinEdge},
\ref{L:ExteriorVertex}, \ref{L:InteriorVertex} and
\ref{L:ProbabilitiesAtVertex} below.

\begin{lemma}\label{L:ConvergenceWithinEdge}
Let $f\in\mathcal{C}^{2}_{b}(\mathbb{R})$ and $\theta>0$ such that $I^{\theta}_{i}\neq \emptyset$ for all $i\in\{1,\cdots, m\}$. Assume the conditions of Theorem~\ref{T:ConvNumericalScheme}.  Then,  uniformly in $z\in D_{i}^{\theta}=\{z\in E: Q(z)\subset I^{\theta}_{i}\}$, we have that
\begin{multline}
  \left\lvert \EE\left[f(U(\bar{Z}^{\veps}_{n\delta \wedge \bar{\tau}_{i}}))-f(U(z))-\int_{0}^{n\delta\wedge \bar{\tau}_{i}}\mathcal{L}^{Y}f(U(\bar{Z}^{\veps}_{s}))ds\right]\right\rvert\\
  \leq C n\delta \left( \left(\sqrt{\frac{\tau}{\veps}}+\sqrt{n\delta}+n\delta+n\left(\frac{\tau}{\veps}\right)^{3/2}\right)e^{C\frac{n\tau}{\veps}}+\frac{\veps}{n\tau}+\frac{\tau}{\delta}+\delta^{3/2}+\left(\frac{\tau}{\veps}\right)^{3/2}\right),
\label{Eq:MartingaleProblem_ApproximatingSequenceGen2}
\end{multline}
for some constant $C<\infty$. In particular, choosing  $n$ such that $\sqrt{\frac{n\tau}{\veps}}e^{C\frac{n\tau}{\veps}}\sim\left(\frac{\tau}{\veps\delta}\right)^{1/4}$, we obtain that
\begin{align}
&\frac{1}{n\delta}\left\lvert \EE\left[f(U(\bar{Z}^{\veps}_{n\delta \wedge \bar{\tau}_{i}}))-f(U(z))-\int_{0}^{n\delta\wedge \bar{\tau}_{i}}\mathcal{L}^{Y}f(U(\bar{Z}^{\veps}_{s}))ds\right]\right\rvert\leq \nonumber\\
&\qquad\leq C \left[\left(\frac{\delta\veps}{\tau}\right)^{1/4}+\left(\frac{\delta\veps}{\tau}\right)^{1/2}+\left(\frac{\tau}{\veps}\right)^{3/2}\frac{1}{\delta}\sqrt{\frac{\delta\veps}{\tau}}+\frac{1}{\log\left(\frac{\tau}{\veps\delta}\right)}+\frac{\tau}{\delta}+\delta^{3/2}+\left(\frac{\tau}{\veps}\right)^{3/2}\right]\nonumber\\
&\qquad \rightarrow 0, \text{ as }\frac{\delta\veps}{\tau}, \left(\frac{\tau}{\veps}\right)^{3/2}\frac{1}{\delta}\downarrow 0.\nonumber
\end{align}

\end{lemma}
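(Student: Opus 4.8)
The plan is to repeat the martingale-problem argument of Section~\ref{S:OneWellCase} almost verbatim, with two modifications: the terminal time $n\delta$ is everywhere replaced by the stopped time $n\delta\wedge\bar{\tau}_{i}$, and every constant appearing in the per-step and averaging estimates is shown to be uniform over the starting region $D_{i}^{\theta}$. The role of the localization to $I_{i}^{\theta}$ is precisely to stay at distance larger than $\theta$ from the vertices of $\Gamma$, hence away from the critical points of $U$; this keeps $|\nabla U|$ bounded above and below, keeps each level-set component $d_{i}(x)$ a smooth compact manifold, and---most importantly---keeps the spectral constant governing the fast dynamics bounded below, so that all error constants become uniform in $z$.

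First I would set up the stopped telescoping identity. Writing the quantity inside the absolute value in \eqref{Eq:MartingaleProblem_ApproximatingSequenceGen2} as a sum over macro steps,
\[
\EE\Big[f(U(\bar{Z}^{\veps}_{n\delta\wedge\bar{\tau}_{i}}))-f(U(z))-\int_{0}^{n\delta\wedge\bar{\tau}_{i}}\mathcal{L}^{Y}f(U(\bar{Z}^{\veps}_{s}))\,ds\Big]=\sum_{k=0}^{n-1}\EE\Big[\mathbf{1}_{\{k\delta<\bar{\tau}_{i}\}}\,\Delta_{k}\Big],
\]
where $\Delta_{k}=f(U(\bar{Z}^{\veps}_{(k+1)\delta}))-f(U(\bar{Z}^{\veps}_{k\delta}))-\int_{k\delta}^{(k+1)\delta}\mathcal{L}^{Y}f(U(\bar{Z}^{\veps}_{s}))\,ds$ is the one-step increment. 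Since $\{k\delta<\bar{\tau}_{i}\}$ is $\mathcal{F}_{k\delta}$-measurable and forces $\bar{Z}^{\veps}_{k\delta}\in D_{i}^{\theta}$, I can condition on $\mathcal{F}_{k\delta}$ and invoke Lemma~\ref{L:TimeStepIntegrator} on each surviving step; because $\bar{Z}^{\veps}_{k\delta}$ lies in $D_{i}^{\theta}$ the constant there is the uniform one described above. This reproduces the decomposition \eqref{Eq:MartingaleProblem_ApproximatingSequence}--\eqref{Eq:MartingaleProblem_ApproximatingSequence2} into $n\delta(J_{1}^{n}+J_{2}^{n}+J_{3}^{n})$, now carrying the stopping time along.

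Next I would re-derive the three bounds. The terms $J_{1}^{n}$ and $J_{3}^{n}$, which compare the discrete HMM iterates against the short micro-integration of the continuous process, are controlled exactly as in \eqref{Eq:J1J3bounds}: the estimate (A.103)--(A.104) of \cite{TaoOwhadiMarsden2010} only uses the per-step integrator bound of Lemma~\ref{L:U_intergator2}, which holds uniformly on $D_{i}^{\theta}$. For $J_{2}^{n}$ I would solve the cell problem \eqref{Eq:AuxilliaryPDE} on the level-set component $d_{i}(x)$ for each $(x,i)\in I_{i}^{\theta}$, obtain its representation and the a priori bound \eqref{Eq:BoundAuxilliaryPDE}, and then apply It\^{o}'s formula exactly as in Section~\ref{S:OneWellCase} to reach the analogue of \eqref{Eq:J2bound}. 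The crucial point---and the main obstacle---is that the constant $\lambda=\lambda(x,i)$ in \eqref{Eq:BoundAuxilliaryPDE} degenerates as $(x,i)$ approaches a vertex, since the fast mixing rate on $d_{i}(x)$ collapses at the critical points of $U$; restricting to $I_{i}^{\theta}$ removes this degeneracy and yields a uniform lower bound $\lambda\geq\lambda_{\theta}>0$ depending only on $\theta$, hence a bound on $J_{2}^{n}$ uniform in the starting point and independent of $\veps,\delta,\tau$.

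Finally, collecting the three estimates gives \eqref{Eq:MartingaleProblem_ApproximatingSequenceGen2}; dividing by $n\delta$ and making the same choice $\sqrt{n\tau/\veps}\,e^{Cn\tau/\veps}\sim(\tau/(\veps\delta))^{1/4}$ as in the one-well case produces the displayed convergence to zero under the scaling $\frac{\delta\veps}{\tau},\left(\frac{\tau}{\veps}\right)^{3/2}\frac{1}{\delta}\downarrow0$. The only genuinely new work relative to Section~\ref{S:OneWellCase} is the bookkeeping of the stopping time---handled by the $\mathcal{F}_{k\delta}$-measurability of $\{k\delta<\bar{\tau}_{i}\}$ and optional stopping---together with the verification that every constant is $\theta$-uniform, and I expect the uniform lower bound on the cell-problem constant $\lambda$ over $I_{i}^{\theta}$ to be the single place demanding genuine care.
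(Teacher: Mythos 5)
Your proposal is correct and follows essentially the same route as the paper, which disposes of this lemma in one line by asserting that it ``follows directly by the arguments of Section~\ref{S:OneWellCase}''; your writeup is simply a careful implementation of that claim, with the stopped telescoping over macro steps and the reuse of the bounds \eqref{Eq:J1J3bounds} and \eqref{Eq:J2bound}. Your identification of the uniform lower bound on the cell-problem constant $\lambda$ over $I_{i}^{\theta}$ (and the resulting $\theta$-uniformity of all error constants) as the one point requiring genuine care is exactly the content implicitly supplied by the localization to $D_{i}^{\theta}$ in the paper's statement.
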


Lemma \ref{L:ConvergenceWithinEdge} follows directly by the arguments of Section \ref{S:OneWellCase}.  In particular Lemma \ref{L:ConvergenceWithinEdge} implies that if $\delta, \frac{\delta\veps}{\tau}, \frac{\tau}{\veps}, \left(\frac{\tau}{\veps}\right)^{3/2}\frac{1}{\delta}\downarrow 0$, then  the process
$Q(\bar{Z}^{\veps}_{n\delta \wedge \bar{\tau}_{i}})$ converges in distribution, within edge $I_{i}$, to the process with generator $\mathcal{L}^{Y}_{i}$ as defined by (\ref{Eq:AveragedOperator}).

\begin{lemma}\label{L:ExteriorVertex}
Let $O_{j}$ be an exterior vertex of the graph $\Gamma$.  Assume the conditions of Theorem \ref{T:ConvNumericalScheme}.  Then,  there exists $\zeta_{0}> 0$, such that for all $0<\zeta\leq \zeta_{0}$ and for all $z\in D_{j}(\pm\zeta)$, there exists a constant $C<\infty$ such that
\begin{align*}
\EE_{z}\bar{\tau}^{\veps}_{j}(\pm\zeta)&\leq C(\zeta+\delta+(\tau/\veps)^{3/2}).
\end{align*}
In other words, for every $\eta>0$ and for $0<\max\{\delta,(\tau/\veps)^{3/2}\}<\zeta\leq \zeta_{0}$ sufficiently small, we have that
\begin{align*}
\EE_{z}\bar{\tau}^{\veps}_{j}(\pm\zeta)&\leq \eta .
\end{align*}

\end{lemma}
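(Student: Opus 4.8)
The plan is to leave the thin energy shell $D_{j}(\pm\zeta)$ by exhibiting a Lyapunov function of the energy whose action under $\mathcal{L}_{Q}$ is bounded above by $-1$, and then to run an approximate optional-stopping argument for the discrete scheme while keeping explicit track of the one-step error. The starting point is that an exterior vertex $O_{j}$ corresponds to a local minimum $z_{j}$ of $U$, at which $\nabla U(z_{j})=0$ while, by the non-degeneracy in Condition~\ref{A:Assumption3}, the Hessian is positive definite so $\tr D^{2}U(z_{j})>0$. Since $D_{j}(\pm\zeta)$ shrinks to $z_{j}$ as $\zeta\downarrow 0$, continuity of $\nabla U$ and $D^{2}U$ provides $\zeta_{0}>0$ and $c_{0}>0$ with $-|\nabla U(z)|^{2}+\beta\tr D^{2}U(z)\geq c_{0}$ for all $z\in D_{j}(\pm\zeta)$ and $0<\zeta\leq\zeta_{0}$. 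Setting $V(x)=\lambda\,(U(O_{j})+\zeta-x)$ with $\lambda=1/c_{0}$ and $v=V\circ U$, we have $0\leq v\leq\lambda\zeta=C\zeta$ on the shell, and since $V''\equiv 0$ the definition \eqref{Eq:LqOperator} gives $\mathcal{L}_{Q}v(z)=-\lambda\bigl(-|\nabla U(z)|^{2}+\beta\tr D^{2}U(z)\bigr)\leq -1$ throughout $D_{j}(\pm\zeta)$.

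Next I would pass to the discrete exit index $\nu=\inf\{n\geq 0:\bar{Z}^{\veps}_{n\delta}\notin D_{j}(\pm\zeta)\}$ and note that the one-step computation behind Lemma~\ref{L:TimeStepIntegrator} is local in the current state, hence yields the conditional estimate $\EE\bigl[v(\bar{Z}^{\veps}_{(k+1)\delta})\mid \bar{Z}^{\veps}_{k\delta}\bigr]=v(\bar{Z}^{\veps}_{k\delta})+\delta\,\mathcal{L}_{Q}v(\bar{Z}^{\veps}_{k\delta})+\rho_{k}$ with $|\rho_{k}|\leq C(\delta^{3/2}+(\tau/\veps)^{3/2})$, uniformly over states with $|\nabla U|\leq C$ (which covers the shell). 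Telescoping up to $n\wedge\nu$ and using $\mathcal{L}_{Q}v\leq -1$ on $\{k<\nu\}$ gives
\[
\EE\bigl[v(\bar{Z}^{\veps}_{(n\wedge\nu)\delta})\bigr]-v(z)\leq -\delta\,\EE[n\wedge\nu]+C(\delta^{3/2}+(\tau/\veps)^{3/2})\,\EE[n\wedge\nu].
\]

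The delicate step, which I expect to be the main obstacle, is the lower bound for $\EE[v(\bar{Z}^{\veps}_{(n\wedge\nu)\delta})]$: since $v\geq 0$ inside and can only turn negative through the single overshooting step that leaves the shell from the high side $U\geq U(O_{j})+\zeta$, the error must be controlled with precisely the orders $\delta$ and $(\tau/\veps)^{3/2}$. Applying Lemma~\ref{L:U_intergator2} over the micro and macro substeps, the conditional $L^{2}$-norm of the energy increment over one macro step is $O\bigl(\sqrt{\delta}\,|\nabla U|+\delta+(\tau/\veps)^{3/2}\bigr)$; and near the non-degenerate minimum $|\nabla U(z)|\leq C|z-z_{j}|\leq C\sqrt{U(z)-U(O_{j})}\leq C\sqrt{\zeta}$ on the shell, so the expected overshoot is $O(\sqrt{\delta\zeta}+\delta+(\tau/\veps)^{3/2})$. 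This is exactly where the smallness $|\nabla U|\lesssim\sqrt{\zeta}$ is essential: it prevents the $\sqrt{\delta}$ Brownian fluctuation of the energy from dominating. Hence $\EE[v(\bar{Z}^{\veps}_{(n\wedge\nu)\delta})]\geq -C(\sqrt{\delta\zeta}+\delta+(\tau/\veps)^{3/2})\geq -C(\zeta+\delta+(\tau/\veps)^{3/2})$, using $\sqrt{\delta\zeta}\leq\tfrac12(\delta+\zeta)$.

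Combining the two bounds and using $v(z)\leq C\zeta$ yields $\delta\,\EE[n\wedge\nu]\bigl(1-C\delta^{1/2}-C(\tau/\veps)^{3/2}/\delta\bigr)\leq C(\zeta+\delta+(\tau/\veps)^{3/2})$. Under the running hypotheses $\delta\downarrow 0$ and $(\tau/\veps)^{3/2}/\delta\downarrow 0$ the prefactor exceeds $1/2$, so letting $n\to\infty$ by monotone convergence gives $\delta\,\EE\nu\leq C(\zeta+\delta+(\tau/\veps)^{3/2})$. Since the first continuous exit satisfies $\bar{\tau}^{\veps}_{j}(\pm\zeta)\leq\nu\delta$, this is the first claim. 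The second claim then follows by first fixing $\zeta\leq\zeta_{0}$ so small that $3C\zeta\leq\kappa$, and then requiring $\delta,(\tau/\veps)^{3/2}<\zeta$, which makes the right-hand side at most $3C\zeta\leq\kappa$.
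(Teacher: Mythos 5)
Your proposal is correct and follows essentially the same route as the paper: both arguments rest on the observation that $\mathcal{L}_{0}U=-|\nabla U|^{2}+\beta\,\mathrm{tr}\,D^{2}U$ is bounded below by a positive constant on the shell around a nondegenerate minimum, then telescope the one-step estimate of Lemma~\ref{L:TimeStepIntegrator} with the (affine) test function up to the exit time and absorb the overshoot and the $O(\delta^{3/2}+(\tau/\veps)^{3/2})$ one-step errors. Your Lyapunov function $V(x)=\lambda(U(O_{j})+\zeta-x)$ is just an affine reparametrization of the potential the paper telescopes directly, so the two proofs are the same computation in slightly different packaging.
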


\begin{lemma}\label{L:InteriorVertex}
Let $O_{j}$ be an interior vertex of the graph $\Gamma$.
Assume the conditions of Theorem \ref{T:ConvNumericalScheme}. Then, there exists $\zeta_{0}>\veps^{\alpha}$ for some exponent $\alpha>0$, such that for all $0<\veps^{\alpha}<\zeta\leq \zeta_{0}$ and for all $z\in D_{j}(\pm\zeta)$
\begin{align*}
\EE_{z}\bar{\tau}^{\veps}_{j}(\pm\zeta)&\leq C \zeta^{2}|\ln\zeta|.
\end{align*}
In other words for every $\eta>0$, there exists $\zeta_{0}>\veps^{\alpha}$ for some exponent $\alpha>0$, such that for all $0<\veps^{\alpha}<\zeta\leq \zeta_{0}$ and for all $z\in D_{j}(\pm\zeta)$
\begin{align*}
\EE_{z}\bar{\tau}^{\veps}_{j}(\pm\zeta)&\leq\eta \zeta.
\end{align*}
\end{lemma}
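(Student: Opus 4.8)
The plan is to reduce the estimate to a Lyapunov/barrier bound for the averaged generator $\mathcal{L}^{Y}$ on the edges meeting at the interior vertex $O_{j}$, and then to transfer that bound to the discrete process $\bar{Z}^{\veps}$ through the martingale estimate already established in \lemref{L:ConvergenceWithinEdge}. Concretely, I would first construct a nonnegative function $g$ on $\Gamma$, supported on the component of $D_{j}(\pm\zeta)$ containing $O_{j}$ (for $\zeta$ small this is just a neighborhood of the separatrices through $O_{j}$, i.e.\ the edges $I_{i}\sim O_{j}$), with $g=0$ on the exit set $\{|x-U(O_{j})|=\zeta\}$, such that $\mathcal{L}^{Y}_{i}g\leq -1$ in the interior of each adjacent edge and $g\in\mathcal{D}(\mathcal{L}^{Y})$, i.e.\ $g$ obeys the gluing condition of Definition \ref{Def:ProcessOnTree}. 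Granting such a $g$ with $\|g\|_{\infty}\leq C\zeta^{2}|\ln\zeta|$, the continuous-time Dynkin identity would give $\EE_{z}\bar{\tau}^{\veps}_{j}(\pm\zeta)\leq\|g\|_{\infty}$; the real content is to reproduce this inequality for the discretized process with an error that stays subordinate to $\zeta^{2}|\ln\zeta|$.

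The barrier is built edge by edge from the degenerate asymptotics near the nondegenerate saddle. Writing $y=x-U(O_{j})$, Condition \ref{A:Assumption3}(3) implies that the level-set period $T_{i}(x)=\oint_{d_{i}(x)}\frac{m}{|\nabla U|}\ell(dz)$ diverges logarithmically, $T_{i}(x)\sim c_{i}|\ln|y||$, while the numerators $\oint_{d_{i}(x)}2\beta|\nabla U|\,m\,\ell(dz)\to b_{ji}$ and $\oint_{d_{i}(x)}\mathcal{L}_{0}U\,\frac{m}{|\nabla U|}\ell(dz)$ stay bounded. Hence $\widehat{A}(x,i)$ and $\widehat{\mathcal{L}_{0}U}(x,i)$ are $O(1/|\ln|y||)$, and $\mathcal{L}^{Y}_{i}g\leq -1$ is equivalent to $\tfrac12 A_{i}(y)g''+B_{i}(y)g'\leq -T_{i}(y)\sim -c_{i}|\ln|y||$ with $A_{i},B_{i}$ of order one. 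Solving this boundary value problem on each edge with $g=0$ at the $\zeta$-end and a common vertex value $g(O_{j})=M$ yields $g_{i}=O(\zeta^{2}|\ln\zeta|)$, the factor $|\ln\zeta|$ being exactly the integrated contribution of $T_{i}$; the single free constant $M$ is then fixed by the one scalar gluing equation $\sum_{i:I_{i}\sim O_{j}}\pm b_{ji}D_{i}g(O_{j})=0$, which is solvable because the weights $b_{ji}$ are positive, and gives $\|g\|_{\infty}=M\leq C\zeta^{2}|\ln\zeta|$.

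To pass to $\bar{Z}^{\veps}$ I would apply \lemref{L:ConvergenceWithinEdge} to this $g$ with the stopping time $n\delta\wedge\bar{\tau}^{\veps}_{j}(\pm\zeta)$ in place of $n\delta\wedge\bar{\tau}_{i}$. Using $\mathcal{L}^{Y}g\leq -1$ and $g\geq 0$, rearranging the martingale identity gives $\EE_{z}[\,n\delta\wedge\bar{\tau}^{\veps}_{j}(\pm\zeta)\,]\leq \|g\|_{\infty}+\mathrm{Err}(n,\delta,\tau,\veps,\zeta)$, where $\mathrm{Err}$ is the right-hand side of \eqref{Eq:MartingaleProblem_ApproximatingSequenceGen2} but now carrying the $\zeta$-dependence of $\|g\|_{C^{2}}\sim|\ln\zeta|$ and of the auxiliary-PDE constant. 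Since the bound is uniform in $n$, monotone convergence yields $\EE_{z}\bar{\tau}^{\veps}_{j}(\pm\zeta)\leq C\zeta^{2}|\ln\zeta|+\mathrm{Err}$. The reformulation then follows at once, because $\zeta^{2}|\ln\zeta|=\zeta\cdot\zeta|\ln\zeta|$ and $\zeta|\ln\zeta|\to0$, so for $\zeta\leq\zeta_{0}(\kappa)$ the right-hand side is at most $\kappa\zeta$.

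The main obstacle, and the origin of the restriction $\zeta>\veps^{\alpha}$, is to show that $\mathrm{Err}$ stays below $\zeta^{2}|\ln\zeta|$. The error inherited from the one-well analysis is controlled by the mixing constant $\lambda=\lambda(z,i)$ of the fast process entering the auxiliary PDE \eqref{Eq:AuxilliaryPDE}, and by the degeneracy of $\sigma\sigma^{T}$ and of $|\nabla U|$ near $O_{j}$ dictated by Conditions \ref{A:Assumption3} and \ref{A:Assumption4}; both degenerate polynomially as $z\to O_{j}$, so $1/\lambda$ and the constants multiplying $\veps$ in the analogue of \eqref{Eq:J2bound} blow up like a negative power of $\zeta$. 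Tracking these powers and balancing the resulting $\veps\,\zeta^{-p}$ contribution against the main term $\zeta^{2}|\ln\zeta|$ forces $\zeta\gg\veps^{\alpha}$ with $\alpha=\alpha(p)>0$; making this balance quantitative, uniformly in the starting point $z\in D_{j}(\pm\zeta)$ and over the whole family of edges meeting at $O_{j}$ (and controlling the $O(\sqrt{\delta})$ overshoot of the discrete exit, which contributes only to $\mathrm{Err}$), is the delicate part of the argument.
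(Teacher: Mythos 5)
Your proposal takes a different route from the paper, and the difference is where the gap lies. The paper never uses the averaged generator $\mathcal{L}^{Y}$ near the interior vertex. Instead it uses Lemma \ref{L:TimeStepIntegrator} (which is pure It\^o/Taylor expansion of $U$ along the scheme, valid without any averaging) to write $U(\bar{Z}^{\veps}_{n\delta})-U(z)$ as a drift sum $\sum_k I_{1,k}^{\delta,\tau}$, a Gaussian with variance $\sum_k I_{2,k}^{\delta,\tau}$, and discretization errors; it then chooses $n$ with $n\delta\sim\zeta^{2}|\ln\veps|$ so that the drift and error parts stay below $\zeta$ while the Gaussian part, whenever its variance reaches $9\zeta^{2}$, exceeds $3\zeta$ with probability $\approx 0.32$, giving $\PP(\bar{\tau}^{\veps}_{j}(\pm\zeta)\geq n\delta)\leq 0.8$; the Markov property then yields geometric tails and $\EE_{z}\bar{\tau}^{\veps}_{j}(\pm\zeta)\leq n\delta/0.2\leq C\zeta^{2}|\ln\zeta|$ for $\zeta\geq\veps^{\alpha}$. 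This martingale--oscillation argument (following Freidlin--Wentzell) is designed precisely to avoid invoking the averaged dynamics in the vertex neighborhood.

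Your plan, by contrast, transfers a graph barrier function $g$ with $\mathcal{L}^{Y}g\leq-1$ to $\bar{Z}^{\veps}$ via Lemma \ref{L:ConvergenceWithinEdge}. The barrier construction itself is plausible ($\widehat{A},\widehat{\mathcal{L}_{0}U}=O(1/|\ln|y||)$ near the saddle does give $\|g\|_{\infty}\sim\zeta^{2}|\ln\zeta|$), but the transfer step is exactly where the argument fails as stated. Lemma \ref{L:ConvergenceWithinEdge} is proved only for starting points in $D_{i}^{\theta}$, i.e.\ uniformly \emph{away} from the vertices, because the error term $J_{2}^{n}$ is controlled through the auxiliary PDE \eqref{Eq:AuxilliaryPDE} whose bound carries the constant $1/\lambda(z,i)$, and $\lambda$ degenerates as the level set approaches the separatrix (the fast flow slows down near the nondegenerate saddle, so its equilibration time diverges). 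Applying the lemma with the stopping time $\bar{\tau}^{\veps}_{j}(\pm\zeta)$ for $z\in D_{j}(\pm\zeta)$ therefore requires a new, quantitative estimate on how $\lambda$ and the $C^{2}$ norm of $u$ blow up as powers of $\zeta$, uniformly over the edges at $O_{j}$ --- you name this as ``the delicate part'' but do not supply it, and it is not a routine bookkeeping step: it amounts to proving a rate for the averaging principle up to the separatrix, which is a substantially harder statement than the lemma itself and is the very thing the standard argument is structured to bypass. A secondary issue is that your $g$ depends on the edge index and is only flux-matched (not $C^{2}$ in $U$) across the vertex, so the expansion of Lemma \ref{L:TimeStepIntegrator}, which is stated for $f\in\mathcal{C}^{2}(\mathbb{R})$ applied to $U$ alone, does not directly apply to $g(Q(\cdot))$ on trajectories that may cross the separatrix before exiting $D_{j}(\pm\zeta)$.
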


\begin{lemma}\label{L:ProbabilitiesAtVertex}
  For $I_{i}\sim O_{j}$ define $b_{ji}$ as in Definition
  \ref{Def:ProcessOnTree} and set
  $p_{ji}=\frac{b_{ji}}{\sum_{i: I_{i}\sim O_{j}} b_{ji}}$.
  Assume the conditions of Theorem \ref{T:ConvNumericalScheme}.  Then, for every $\eta>0$, there
  exists $\zeta_{0}> \max\{\delta,(\tau/\veps)^{3/2}\}$, such that for
  all $0<\max\{\delta,(\tau/\veps)^{3/2}\}<\zeta\leq \zeta_{0}$ there
  exists $\zeta'_{0}=\zeta'_{0}(\zeta)$ such that for all sufficiently
  small $\veps,\delta,\tau$
\begin{align}
  \left\lvert\PP_{z}\left(\bar{Z}^{\veps}_{\bar{\tau}^{\veps}_{j}(\pm\zeta)}\notin \overline{\partial D_{j}(\pm\zeta)}\cap  I^{\circ}_{i}\right)-p_{ji}\right\rvert<\eta.\nonumber
\end{align}
for all $z\in \overline{D_{j}(\pm \zeta'_{0})}$.
\end{lemma}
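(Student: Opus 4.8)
The plan is to follow the exit-distribution analysis at interior vertices of \cite[Chapter 8]{FreidlinWentzell88} (see also \cite{FW2}) and to make it quantitative in the scheme parameters using the machinery already developed in Section~\ref{S:OneWellCase} together with the exit-time bound of Lemma~\ref{L:InteriorVertex}. First I would identify the limiting value. For the diffusion $Y$ on the graph, let $H(x,i)$ be the probability, started from $(x,i)\in D_{j}(\pm\zeta)$, of leaving $D_{j}(\pm\zeta)$ through the boundary piece contained in edge $I_{i_{0}}$. The function $H$ is $\mathcal{L}^{Y}_{i}$-harmonic inside each edge, continuous at $O_{j}$, satisfies the gluing condition of Definition~\ref{Def:ProcessOnTree}, and has boundary values $\mathbf{1}_{\{i=i_{0}\}}$ on the level sets $\{U=U(O_{j})\pm\zeta\}$. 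Writing $\mathcal{L}^{Y}_{i}$ in its self-adjoint form $\mathcal{L}^{Y}_{i}g=\frac{1}{2T_{i}(x)}\frac{d}{dx}\left(A_{i}(x)\frac{dg}{dx}\right)$ with $A_{i}(x)=T_{i}(x)\widehat{A}(x,i)=2\beta\oint_{d_{i}(x)}|\nabla U(z)|m(z)\ell(dz)$, harmonicity says the flux $\phi_{i}=A_{i}(x)D_{i}H$ is constant along $I_{i}$, and the gluing condition is precisely the flux-conservation law $\sum_{i:I_{i}\sim O_{j}}\phi_{i}=0$ at $O_{j}$ (the signs $\pm$ in Definition~\ref{Def:ProcessOnTree} encoding the edge orientation). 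Since $A_{i}(U(O_{j}))=b_{ji}$ as $d_{i}(x)\to\gamma_{ji}$, integrating $D_{i}H=\phi_{i}/A_{i}$ across an edge-piece of energy width $\zeta$ gives a harmonic increment $\phi_{i}\zeta/b_{ji}+o(\zeta)$; the boundary conditions then force $\phi_{i}=-b_{ji}H(O_{j})/\zeta+o(1)$ for $i\neq i_{0}$ and $\phi_{i_{0}}=b_{ji_{0}}(1-H(O_{j}))/\zeta+o(1)$, and flux conservation yields $H(O_{j})=b_{ji_{0}}/\sum_{i}b_{ji}+o(1)=p_{ji_{0}}+o(1)$ as $\zeta\downarrow 0$.

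To transfer this to the discrete scheme, I would use $H$, lifted to $E$ via $Q$, as a test function and run the argument of Section~\ref{S:OneWellCase} inside each edge-piece of $D_{j}(\pm\zeta)$ up to the stopping time $\bar{\tau}^{\veps}_{j}(\pm\zeta)$. Exactly as in the proof of Lemma~\ref{L:ConvergenceWithinEdge}, the one-step estimate of Lemma~\ref{L:U_intergator2} together with the corrector $u$ solving the auxiliary equation $-\widehat{\mathcal{L}}u=\mathcal{L}_{Q}H-\mathcal{L}^{Y}H$ (cf.~\eqref{Eq:AuxilliaryPDE}, whose right-hand side is mean-zero since $H$ is harmonic) shows that $H(Q(\bar{Z}^{\veps}_{n\delta\wedge\bar{\tau}^{\veps}_{j}}))$ is an approximate martingale: because $\mathcal{L}^{Y}_{i}H=0$ in the interior of every edge, the only genuine contribution to $\EE_{z}\int_{0}^{\bar{\tau}^{\veps}_{j}}\mathcal{L}^{Y}H\,ds$ comes from a neighborhood of $O_{j}$ and is controlled by $\EE_{z}\bar{\tau}^{\veps}_{j}(\pm\zeta)\leq C\zeta^{2}|\ln\zeta|$ from Lemma~\ref{L:InteriorVertex}. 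Hence $\EE_{z}H(Q(\bar{Z}^{\veps}_{\bar{\tau}^{\veps}_{j}}))=H(z)+o(1)$. Finally, the discrete boundary values identify $\EE_{z}H(Q(\bar{Z}^{\veps}_{\bar{\tau}^{\veps}_{j}}))$ with $\PP_{z}(\text{exit through }I_{i_{0}})$ up to the overshoot of the energy across $\partial D_{j}(\pm\zeta)$; by Lemma~\ref{L:U_intergator2} a macro step moves $U$ by at most $O(\delta+(\tau/\veps)^{3/2})$, which is exactly why the admissible window requires $\zeta>\max\{\delta,(\tau/\veps)^{3/2}\}$, so that the overshoot is negligible relative to $\zeta$. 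Sending the scheme parameters and then $\zeta$ to zero, with $z\in\overline{D_{j}(\pm\zeta'_{0})}$ so that $z\to O_{j}$, gives $\PP_{z}(\text{exit through }I_{i_{0}})\to p_{ji_{0}}$, which is the claim.

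The main obstacle is the degeneracy of the fast dynamics at the saddle point $O_{j}$. As $x\to U(O_{j})$ the period $T_{i}(x)$ diverges logarithmically and the averaged diffusivity $\widehat{A}(x,i)=A_{i}(x)/T_{i}(x)$ vanishes, so the spectral gap $\lambda$ governing the corrector bound $|u|\leq\frac{2}{\lambda}\sup|\mathcal{L}_{Q}H-\mathcal{L}^{Y}H|$ of Section~\ref{S:OneWellCase} degenerates and the averaging underlying Lemma~\ref{L:ConvergenceWithinEdge} deteriorates; moreover $H\circ Q$ is only Lipschitz across the separatrices meeting at $O_{j}$. This is what forces the two-sided window $\veps^{\alpha}<\zeta\leq\zeta_{0}$ of Lemma~\ref{L:InteriorVertex}: $\zeta$ must stay above the resolution $\veps^{\alpha}$ of the scheme near the vertex for the approximate-martingale and corrector estimates to remain valid, yet be small enough that $H(O_{j})=p_{ji_{0}}+o(1)$. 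Reconciling these two requirements on the annulus $\{\veps^{\alpha}<|U-U(O_{j})|<\zeta\}$, namely showing that the discrete process selects the outgoing edge with the correct conductance-weighted probabilities $p_{ji}$ and does not linger near $O_{j}$, is the heart of the matter, and is handled by transplanting the continuous-time separatrix estimates of \cite[Chapter 8]{FreidlinWentzell88} into the discrete setting via the quantitative bounds of Lemmas~\ref{L:ConvergenceWithinEdge} and~\ref{L:InteriorVertex}.
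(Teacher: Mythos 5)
Your proposal takes a genuinely different route from the paper's. The paper follows the two-step structure of Freidlin--Wentzell: it first proves that the exit probability $F^{\veps}(x)=\PP_{x}\bigl(\bar{Z}^{\veps}_{\bar{\tau}^{\veps}(\pm\zeta)}\notin \overline{\partial D_{j}(\pm\zeta)}\cap I^{\circ}_{i}\bigr)$ is asymptotically constant on $\overline{D_{j}(\pm\zeta')}$, by combining a mixing estimate on level sets with an explicit bound on the probability of exiting ``sideways'' into $I^{\circ}_{i_{1}}\cup I^{\circ}_{i_{2}}$ before reaching $C_{ji_{0}}(\zeta')$; that bound comes from the $\EE U(\bar{Z}^{\veps}_{\bar{\tau}})$ balance of Lemma~\ref{L:TimeStepIntegrator} together with the exit-time estimate of Lemma~\ref{L:InteriorVertex}, and it is there that the requirement $\zeta'>\delta+(\tau/\veps)^{3/2}$ appears. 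Only afterwards is the common value identified with $p_{ji}$ by carrying over the Freidlin--Wentzell computation. You instead construct the harmonic function $H$ for the limiting generator with the gluing condition built in, compute $H(O_{j})\to p_{ji_{0}}$ by flux conservation (that computation is correct and is a clean way to see where $p_{ji}$ comes from), and then try to read off the discrete exit probability from an approximate-martingale property of $H\circ Q$.

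The gap is in that last step. The approximate-martingale property of $H(Q(\bar{Z}^{\veps}_{n\delta}))$ rests on the corrector $u$ solving \eqref{Eq:AuxilliaryPDE} with right-hand side $\mathcal{L}_{Q}H-\mathcal{L}^{Y}H$, and, as you yourself note, both ingredients degenerate exactly where the lemma has its content: near the separatrix $C_{j}$ the constant $\lambda$ in the bound $|u|\leq\frac{2}{\lambda}\sup|g-\widehat{g}|$ tends to zero, $T_{i}(x)$ diverges, and $H\circ Q$ fails to be $\mathcal{C}^{2}$. Consequently the per-step error in the martingale decomposition is not uniformly $O(\delta^{3/2}+(\tau/\veps)^{3/2})$ there, and the bound $\EE_{z}\bar{\tau}^{\veps}_{j}(\pm\zeta)\leq C\zeta^{2}|\ln\zeta|$ of Lemma~\ref{L:InteriorVertex} controls the accumulated error only if one additionally proves a quantitative rate for the blow-up of the corrector and of the derivatives of $H$ as $x\to U(O_{j})$ and shows that the product still vanishes; no such rate is given. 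Declaring this to be ``handled by transplanting the continuous-time separatrix estimates'' defers precisely the part of the argument the lemma is supposed to supply --- whereas the paper's own deferral concerns only the final identification of an already-established constant, after the genuinely new discrete estimates have been carried out. There is also a mild circularity of presentation: $H$ is built from the gluing condition of Definition~\ref{Def:ProcessOnTree}, which is the very boundary behaviour this lemma is meant to justify for the scheme; that is admissible provided the martingale step is proved independently, but it makes it all the more important that the near-vertex analysis not be taken for granted.
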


Lemmas \ref{L:ExteriorVertex}, \ref{L:InteriorVertex} and \ref{L:ProbabilitiesAtVertex} follow as Lemmas 3.4, 3.5 and 3.6 in  \cite[Chapter 8]{FreidlinWentzell88}. The main difference between our situation and that of \cite{FreidlinWentzell88} is that we are working with the discrete approximation, which implies that we need information on the error bounds in terms of the parameters $\delta,\veps,\tau$, as it was also the case for Lemma \ref{L:ConvergenceWithinEdge}. Given that the method of the proof is similar to the corresponding proofs of \cite{FreidlinWentzell88}, we do not repeat all the details here.

The principle idea is that Lemma \ref{L:ConvergenceWithinEdge} controls the behavior within each branch of the tree, whereas Lemmas \ref{L:ExteriorVertex}, \ref{L:InteriorVertex} allow us to conclude that the approximating  process spends in the limit zero time on exterior and interior vertices respectively (equivalently it spends zero time in the neighborhood of stable and unstable points of the dynamical system). Then, Lemma \ref{L:ProbabilitiesAtVertex} characterizes the splitting probability in each interior vertex concluding the description of the limiting Markov process.

In order to demonstrate the differences with the corresponding proofs of \cite{FreidlinWentzell88} and to see the role of the discrete approximation,  we  demonstrate the proofs of these lemmas emphasizing the differences.
\begin{proof}[Proof of Lemma \ref{L:ExteriorVertex}]
Let us assume that $U(z_{j})$ is a local minimum of $U$. It is clear that the following relation should hold
\[
|\EE U(\bar{Z}^{\veps}_{\bar{\tau}^{\veps}_{j}(\pm\zeta)})- U(z_{j})|\geq \zeta.
\]

Let us  define
\[
k_{1}=\max\{k\in\mathbb{N}: (k\delta+\tau)\vee k\delta\leq \bar{\tau}^{\veps}_{j}(\pm\zeta)\}.
\]

Let us assume that $k_{1}$ is such that $(k_{1}\delta+\tau)\leq \bar{\tau}^{\veps}_{j}(\pm\zeta)$. The approach is the same if $k_{1}$ is such that $k_{1}\delta\leq \bar{\tau}^{\veps}_{j}(\pm\zeta)$. By adding and subtracting terms of the form $U(\bar{Z}^{\veps}_{m\delta})$ for $m=0,1,\cdots, k_{1}$ we get
\begin{align}
 &U(\bar{Z}^{\veps}_{\bar{\tau}^{\veps}_{j}(\pm\zeta)})= \left[U(\bar{Z}^{\veps}_{\bar{\tau}^{\veps}_{j}(\pm\zeta)})-U(\bar{Z}^{\veps}_{k_{1}\delta+\tau})\right]+U(\bar{Z}^{\veps}_{k_{1}\delta+\tau})\nonumber\\
 &\quad=\left[U(\bar{Z}^{\veps}_{\bar{\tau}^{\veps}_{j}(\pm\zeta)})-U(\bar{Z}^{\veps}_{k_{1}\delta+\tau})\right]+\left[U(\bar{Z}^{\veps}_{k_{1}\delta+\tau})-U(\bar{Z}^{\veps}_{k_{1}\delta})\right]
 +U(\bar{Z}^{\veps}_{k_{1}\delta})\nonumber\\
&\quad=\left[U(\bar{Z}^{\veps}_{\bar{\tau}^{\veps}_{j}(\pm\zeta)})-U(\bar{Z}^{\veps}_{k_{1}\delta+\tau})\right]+\left[U(\bar{Z}^{\veps}_{k_{1}\delta+\tau})-U(\bar{Z}^{\veps}_{k_{1}\delta})\right]
 +\sum_{m=1}^{k_{1}}\left[U(\bar{Z}^{\veps}_{m\delta})-U(\bar{Z}^{\veps}_{(m-1)\delta})\right]+U(z).\nonumber
\end{align}
Taking expected value, Lemma \ref{L:TimeStepIntegrator} (with the test function $f(u) = u$) implies that for $\delta, \tau/\veps$ sufficiently small
\begin{align}
 \EE U(\bar{Z}^{\veps}_{\bar{\tau}^{\veps}_{j}(\pm\zeta)})&=\EE\left[U(\bar{Z}^{\veps}_{\bar{\tau}^{\veps}_{j}(\pm\zeta)})-U(\bar{Z}^{\veps}_{k_{1}\delta+\tau})\right]+\EE\left[U(\bar{Z}^{\veps}_{k_{1}\delta+\tau})-U(\bar{Z}^{\veps}_{k_{1}\delta})\right]\nonumber\\
 &\qquad+\delta \EE \sum_{m=1}^{k_{1}}\left[\mathcal{L}_{0}U(\bar{Z}^{\veps}_{(m-1)\delta})+O\left(\delta^{1/2}+\left(\frac{\tau}{\veps}\right)^{3/2}\frac{1}{\delta}\right)\right]+U(z).\nonumber
\end{align}

Next we notice that up to an unimportant  multiplicative  constant $|\EE U(\bar{Z}^{\veps}_{\bar{\tau}^{\veps}_{j}(\pm\zeta)})-U(z)|<\zeta+\delta$ and that the non-degeneracy of $U$ implies that for every $z\in D_{j}(\pm\zeta)$ there is a constant  $C_{0}>0$, such that $\mathcal{L}_{0}U(z)>C_{0}$. Hence, we have obtained
\begin{align}
 \zeta+\delta&>\EE\left[U(\bar{Z}^{\veps}_{\bar{\tau}^{\veps}_{j}(\pm\zeta)})-U(\bar{Z}^{\veps}_{k_{1}\delta+\tau})\right]+\EE\left[U(\bar{Z}^{\veps}_{k_{1}\delta+\tau})-U(\bar{Z}^{\veps}_{k_{1}\delta})\right]\nonumber\\
 &\quad+ C  \EE \bar{\tau}^{\veps}_{j}(\pm\zeta)\left(1+O\left(\delta^{1/2}+\left(\frac{\tau}{\veps}\right)^{3/2}\frac{1}{\delta}\right)\right) + C  \EE (\delta k_{1}-\bar{\tau}^{\veps}_{j}(\pm\zeta))\left(1+O\left(\delta^{1/2}+\left(\frac{\tau}{\veps}\right)^{3/2}\frac{1}{\delta}\right)\right) \nonumber\\
 &\geq \EE\left[U(\bar{Z}^{\veps}_{\bar{\tau}^{\veps}_{j}(\pm\zeta)})-U(\bar{Z}^{\veps}_{k_{1}\delta+\tau})\right]+\EE\left[U(\bar{Z}^{\veps}_{k_{1}\delta+\tau})-U(\bar{Z}^{\veps}_{k_{1}\delta})\right]\nonumber\\
 &\quad+C_{0}  \EE \bar{\tau}^{\veps}_{j}(\pm\zeta)\left(1+O\left(\delta^{1/2}+\left(\frac{\tau}{\veps}\right)^{3/2}\frac{1}{\delta}\right)\right)
 -\delta \left(1+O\left(\delta^{1/2}+\left(\frac{\tau}{\veps}\right)^{3/2}\frac{1}{\delta}\right)\right). \nonumber
\end{align}

The latter inequality follows since by the definition of $k_1$ we have that $|\delta k_{1}-\bar{\tau}^{\veps}_{j}(\pm\zeta)|<\delta$. Rearranging the latter expression, we obtain for some unimportant constants $0<C_{i}<\infty$
\begin{align}
  \EE \bar{\tau}^{\veps}_{j}(\pm\zeta) &\leq C_{1}\frac{\zeta+2\delta+\delta^{3/2}+(\frac{\tau}{\epsilon})^{3/2}+\left|\EE\left[U(\bar{Z}^{\veps}_{\bar{\tau}^{\veps}_{j}(\pm\zeta)})-U(\bar{Z}^{\veps}_{k_{1}\delta+\tau})\right]\right|+\left|\EE\left[U(\bar{Z}^{\veps}_{k_{1}\delta+\tau})-U(\bar{Z}^{\veps}_{k_{1}\delta})\right]\right|}
 {1+O\left(\delta^{1/2}+\left(\frac{\tau}{\veps}\right)^{3/2}\frac{1}{\delta}\right)} \nonumber\\
&\leq C_{2}\frac{\zeta+2\delta+\delta^{3/2}+(\frac{\tau}{\epsilon})^{3/2}+(\delta+\tau)+(\delta^{3/2}+(\tau/\veps)^{3/2})}
 {1+O\left(\delta^{1/2}+\left(\frac{\tau}{\veps}\right)^{3/2}\frac{1}{\delta}\right)}, \nonumber
\end{align}
which implies that for $0<\tau<\delta<\frac{\tau}{\veps}\ll 1$ and $\left(\frac{\tau}{\veps}\right)^{3/2}\frac{1}{\delta}\downarrow 0$, we get
\begin{align}
 \EE \bar{\tau}^{\veps}_{j}(\pm\zeta) &\leq C_{3}\left(\zeta+\delta+(\frac{\tau}{\epsilon})^{3/2}\right),  \nonumber
\end{align}
or, in other words if we choose $\zeta>\max\{\delta, (\frac{\tau}{\epsilon})^{3/2}\}$, we indeed obtain that
\begin{align}
 \EE \bar{\tau}^{\veps}_{j}(\pm\zeta) &\leq C_{4} \zeta,  \nonumber
\end{align}
from which the statement of the lemma follows.
\end{proof}

\begin{proof}[Proof of Lemma \ref{L:InteriorVertex}]
We start with the following usage of Lemma \ref{L:TimeStepIntegrator},
\begin{align*}
U(\bar{Z}^{\veps}_{n\delta})&=\sum_{k=0}^{n-1}\left[U(\bar{Z}^{\veps}_{(k+1)\delta})-U(\bar{Z}^{\veps}_{k\delta})\right]+ U(z)\nonumber\\
&=\sum_{k=0}^{n-1}\left[U(\bar{Z}^{\veps}_{(k+1)\delta})-U(\bar{Z}^{\veps}_{k\delta+\tau})\right]+ \sum_{k=0}^{n-1}\left[U(\bar{Z}^{\veps}_{k\delta+\tau})-U(\bar{Z}^{\veps}_{k\delta})\right] +U(z)\nonumber\\
&=\sum_{k=0}^{n-1}\left[(\delta-\tau) \mathcal{L}_{0}U(\bar{Z}^{\veps}_{k\delta+\tau})+\tau \mathcal{L}_{0}U(\bar{Z}^{\veps}_{k\delta})\right]+\nonumber\\
&\qquad+\sqrt{2\beta}\sum_{k=0}^{n-1}\left[\sqrt{\delta-\tau} \nabla U(\bar{Z}^{\veps}_{k\delta+\tau})\xi^{'}_{k}+\sqrt{\tau}\nabla U(\bar{Z}^{\veps}_{k\delta})\xi_{k}\right] +\sum_{k=0}^{n-1}\left[R_{2,k}+R_{3,k}\right] +U(z),\nonumber
\end{align*}
where $\xi_{k}^{'},\xi_{k}$ are independent standard normal random variables and $R_{2,k},R_{3,k}$ are as in the proof of Lemma \ref{L:TimeStepIntegrator}. Using the independence of the involved normal random variables, we can then write that in distribution
\begin{align}
U(\bar{Z}^{\veps}_{n\delta})
&=\sum_{k=0}^{n-1}I_{1,k}^{\delta,\tau} + N\left(0,\sum_{k=0}^{n-1}I_{2,k}^{\delta,\tau}\right) +\sum_{k=0}^{n-1}\left[R_{2,k}+R_{3,k}\right] +U(z),
\end{align}
where
\begin{align}
I_{1,k}^{\delta,\tau}&=\left[(\delta-\tau) (\mathcal{L}_{0}U(\bar{Z}^{\veps}_{k\delta+\tau})-\mathcal{L}_{0}U(\bar{Z}^{\veps}_{k\delta}))+\delta \mathcal{L}_{0}U(\bar{Z}^{\veps}_{k\delta})\right],\nonumber\\
I_{2,k}^{\delta,\tau}&=2\beta\left[(\delta-\tau) |\nabla U(\bar{Z}^{\veps}_{k\delta+\tau})|^{2}+\tau|\nabla U(\bar{Z}^{\veps}_{k\delta})|^{2}\right]\nonumber
\end{align}
and $N\left(0,\sum_{k=0}^{n-1}I_{2,k}^{\delta,\tau}\right)$ represents a normal random variable with mean zero and variance $\sum_{k=0}^{n-1}I_{2,k}^{\delta,\tau}$.

Let us recall now that
\[
\left(\EE\left(\sum_{k=0}^{n-1}\left[R_{2,k}+R_{3,k}\right]\right)^{2}\right)^{1/2}=O(n\delta^{3/2}+n(\tau/\veps)^{3/2}).\]
 %Moreover, it is clear that if $\sum_{k=0}^{n-1}I_{2,k}^{\delta,\tau}\geq 9\zeta^{2}$, then
%\[
%\max_{1\leq j\leq n}\left|N\left(0,\sum_{k=0}^{j-1}I_{2,k}^{\delta,\tau}\right)\right|\geq \left|N(0,9\zeta^{2})\right|.
%\]

We have that $\bar{\tau}^{\veps}_{j}(\pm\zeta)$ is less or equal to the time when the random variable $|U(\bar{Z}^{\veps}_{n\delta})-U(z)|$ reaches the level $2\zeta$. This happens if the term $\sum_{k=0}^{n-1}\left[I_{1,k}^{\delta,\tau}+R_{2,k}+R_{3,k}\right]$ is small in absolute value, while the term $N\left(0,2\beta\sum_{k=0}^{n-1}(\delta-\tau) |\nabla U(\bar{Z}^{\veps}_{k\delta+\tau})|^{2}+\tau|\nabla U(\bar{Z}^{\veps}_{k\delta})|^{2}\right) $ is large. In other words we have the inclusion
\begin{align}
&\left\{\sum_{k=0}^{n-1}\left[I_{1,k}^{\delta,\tau}+R_{2,k}+R_{3,k}\right]<\zeta, N\left(0,\sum_{k=0}^{n-1}I_{2,k}^{\delta,\tau}\right)>3\zeta\right\} \subseteq \left\{\bar{\tau}^{\veps}_{j}(\pm\zeta)< n\delta\right\}. \label{Eq:LeftInclusion}
\end{align}

We also have
\begin{align}
\left\{\bar{\tau}^{\veps}_{j}(\pm\zeta)\geq n\delta\right\}&\subseteq\left\{\bar{\tau}^{\veps}_{j}(\pm\zeta)\geq n\delta, \sum_{k=0}^{n-1}I_{2,k}^{\delta,\tau}<9\zeta^{2}\right\}\bigcup\nonumber\\
&\quad\bigcup \left\{\bar{\tau}^{\veps}_{j}(\pm\zeta)\geq n\delta, \sum_{k=0}^{n-1}I_{2,k}^{\delta,\tau}\geq 9\zeta^{2}, \left|N(0,9\zeta^{2})\right|\geq 3\zeta\right\}\bigcup\nonumber\\
&\quad\bigcup \left\{\bar{\tau}^{\veps}_{j}(\pm\zeta)\geq n\delta, \sum_{k=0}^{n-1}I_{2,k}^{\delta,\tau}\geq 9\zeta^{2}, \left|N(0,9\zeta^{2})\right|< 3\zeta\right\}.\nonumber
\end{align}

Choose now $n\delta$ such that for the given $\zeta$ we have $n\delta<\zeta$ and in particular that
\[
\sum_{k=0}^{n-1}\left[I_{1,k}^{\delta,\tau}+R_{2,k}+R_{3,k}\right]<\zeta,
\]
for all trajectories $\bar{Z}^{\veps}$ for which
$\bar{\tau}^{\veps}_{j}(\pm\zeta)\geq n\delta$. Then, by
(\ref{Eq:LeftInclusion}), the second inclusion in the last display
cannot hold.  Thus we have
\begin{align}
\PP\left(\bar{\tau}^{\veps}_{j}(\pm\zeta)\geq n\delta\right)&\leq \PP\left(\bar{\tau}^{\veps}_{j}(\pm\zeta)\geq n\delta, \sum_{k=0}^{n-1}I_{2,k}^{\delta,\tau}<9\zeta^{2}\right)+\PP\left( \left|N(0,9\zeta^{2})\right|< 3\zeta\right)\nonumber\\
&=\PP\left(\bar{\tau}^{\veps}_{j}(\pm\zeta)\geq n\delta, \sum_{k=0}^{n-1}I_{2,k}^{\delta,\tau}<9\zeta^{2}\right)+0.6826.\nonumber
\end{align}

Recall that we have chosen $n$ such that $n\delta<\zeta$ and in particular that $\sum_{k=0}^{n-1}\left[I_{1,k}^{\delta,\tau}+R_{2,k}+R_{3,k}\right]<\zeta$. To be precise, the last requirement is that up to a deterministic constant
$n\left(\delta+\delta^{3/2}+(\tau/\veps)^{3/2}\right)<\zeta$. Let us enforce that by requiring that up to an appropriate deterministic constants $\zeta^{2}< n\left(\delta+\delta^{3/2}+(\tau/\veps)^{3/2}\right) <\zeta$. In particular, we can take $n\left(\delta+\delta^{3/2}+(\tau/\veps)^{3/2}\right)$ to be of the order of $\zeta^{2}|\ln \veps|$ such that $\zeta|\ln \veps|\rightarrow 0$. Then, the probability of the first term in the right hand side of the last display can be made as small as we want, say less than $0.10$.

Hence, we have obtained that with the particular choices for $n$ and for sufficiently small $\veps,\delta,\tau$ and $\zeta$ such that $\zeta^{2}< n\left(\delta+\delta^{3/2}+(\tau/\veps)^{3/2}\right) <\zeta$ and $n\left(\delta+\delta^{3/2}+(\tau/\veps)^{3/2}\right)$ to be of the order of $\zeta^{2}|\ln \veps|$, we have that
\begin{align}
\PP\left(\bar{\tau}^{\veps}_{j}(\pm\zeta)\geq n\delta\right)&\leq 0.8.\nonumber
\end{align}
Then, by Markov property we obtain that $\PP\left(\bar{\tau}^{\veps}_{j}(\pm\zeta)\geq N n\delta\right)\leq 0.8^{N}$, which then implies (using the fact that the random variable $\bar{\tau}^{\veps}_{j}(\pm\zeta)$ is positive and that $0.8^{N}$ is a geometric series) that up to a deterministic constant $C<\infty$ that may change from inequality to inequality
\[
\EE_{z}\bar{\tau}^{\veps}_{j}(\pm\zeta)\leq\frac{n\delta}{1-0.8}\leq C \zeta^{2}|\ln \veps|\leq C \zeta^{2}|\ln \zeta|.
\]
  The second to the last inequality of the previous display is true because $n\delta$ is chosen to be of order $\zeta^{2}|\ln \veps|$ and the last inequality because by assumption $\zeta\geq \veps^{\alpha}$ for some exponent $\alpha>0$. This concludes the proof of the lemma.
\end{proof}

\begin{proof}[Proof of Lemma \ref{L:ProbabilitiesAtVertex}]
  Using \cite[Lemma 8.6.2]{FreidlinWentzell88} for the discrete  approximation $\bar{Z}^{\veps}_{t}$ we have
\begin{align}
\lim_{\veps,\delta,\frac{\tau}{\veps}\downarrow 0}\max_{x_{1},x_{2}\in C_{i}(U)}\max_{f:\|f\|\leq 1}\left|\EE_{x_{1}} f (\bar{Z}^{\veps}_{\bar{\tau}^{\veps}(U_{1},U_{2})})-E_{x_{2}} f (\bar{Z}^{\veps}_{\bar{\tau}^{\veps}(U_{1},U_{2})})\right|&=0, \nonumber
\end{align}
where $f$ is defined on $\partial D_{i}(U_{1},U_{2})$ and $\bar{\tau}^{\veps}(U_{1},U_{2})$ is the first time of exit of the process $\bar{Z}^{\veps}$ from  the branch $I_{i}$ from either of the two sides $U_{1}<U_{2}$. Then, by Markov property, as in \cite[Lemma 8.6.3]{FreidlinWentzell88}, we get that
\begin{align}
\lim_{\veps,\delta,\frac{\tau}{\veps}\downarrow 0}\max_{x_{1},x_{2}\in C_{ji}(\zeta^{'})}\left|F^{\veps}(x_{1})-F^{\veps}(x_{2})\right|&=0,\label{Eq:ClosnessOnBoundary}
\end{align}
where for $\zeta^{'}<\zeta$,  $F^{\veps}(x)=\PP\left(\bar{Z}^{\veps}_{\bar{\tau}^{\veps}(\pm\zeta)}\notin \overline{\partial D_{j}(\pm\zeta)}\cap  I^{\circ}_{i}\right)$. The next thing to prove is that for every $\zeta>0, \kappa>0$ there exists $0<\zeta^{'}<\zeta $ such that for every $\veps,\delta,\frac{\tau}{\veps}$ sufficiently small
\begin{align}
\max_{x_{1},x_{2}\in \bar{D_{j}}(\pm \zeta^{'})}\left|F^{\veps}(x_{1})-F^{\veps}(x_{2})\right|&<\kappa.\nonumber
\end{align}

For edges $I_{i}\sim O_{j}$ let us set $f(x)=1_{x \notin \overline{\partial D_{j}(\pm\zeta)}\cap  I^{\circ}_{i}}$. There are exactly three regions corresponding to $I^{\circ}_{i_{0}}, I^{\circ}_{i_{1}}, I^{\circ}_{i_{2}}$ that are separated by the separatrix $C_{j}$. The region corresponding to $I^{\circ}_{i_{0}}$ adjoins the whole curve $C_{j}$, whereas $I^{\circ}_{i_{1}}, I^{\circ}_{i_{2}}$ adjoins only part of it. In particular we have that $C_{ji_{0}}=C_{ji_{1}}\cup C_{ji_{2}}$. Then, as in the proof of \cite[Lemma 8.3.6]{FreidlinWentzell88}, it can be shown that
\begin{equation}
\begin{aligned}
\left|F^{\veps}(x_{1})-F^{\veps}(x_{2})\right|&\leq \left[\sup_{x\in\partial D_{j}(\pm\zeta)}f(x)-\inf_{x\in\partial D_{j}(\pm\zeta)}f(x)\right]\\
&\qquad \times \max\left\{ \PP_{x_{m}}\left(\bar{Z}^{\veps}_{\bar{\tau}}\notin (\overline{\partial D_{j}(\pm\zeta)}\cap  I^{\circ}_{i_{1}})\cup (\overline{\partial D_{j}(\pm\zeta)}\cap  I^{\circ}_{i_{2}})\right): m=1,2\right\}\\
&+ \left[\sup_{x\in C_{ji_{0}}(\zeta')}F^{\veps}(x)-\inf_{x\in C_{ji_{0}}(\zeta')}F^{\veps}(x)\right],\label{Eq:ExpressionToBound}
\end{aligned}
\end{equation}
where $\bar{\tau}$ is the first time that the discrete approximation process $\bar{Z}^{\veps}_{t}$ exits $(\overline{\partial D_{j}(\pm\zeta')}\cap  I^{\circ}_{i_{0}})$ or $(\overline{\partial D_{j}(\pm\zeta)}\cap  I^{\circ}_{i_{1}})\cup (\overline{\partial D_{j}(\pm\zeta)}\cap  I^{\circ}_{i_{2}})$. Clearly, we have that $\bar{\tau}\leq \bar{\tau}^{\veps}(\pm\zeta)$.

By (\ref{Eq:ClosnessOnBoundary}), the second additive term in (\ref{Eq:ExpressionToBound}) is arbitrarily small for sufficiently small $\veps,\delta,\tau/\veps$. So, it remains to estimate $P^{\veps}_{x}\doteq \PP_{x}\left(\bar{Z}^{\veps}_{\bar{\tau}}\notin (\overline{\partial D_{j}(\pm\zeta)}\cap  I^{\circ}_{i_{1}})\cup (\overline{\partial D_{j}(\pm\zeta)}\cap  I^{\circ}_{i_{2}})\right)$. As in the proof of Lemma \ref{L:ExteriorVertex} for an appropriate integer $k_{1}$ and for $\delta, \tau/\veps$ sufficiently small
\begin{align}
 \EE U(\bar{Z}^{\veps}_{\bar{\tau}})&=\EE\left[U(\bar{Z}^{\veps}_{\bar{\tau}})-U(\bar{Z}^{\veps}_{k_{1}\delta+\tau})\right]+\EE\left[U(\bar{Z}^{\veps}_{k_{1}\delta+\tau})-U(\bar{Z}^{\veps}_{k_{1}\delta})\right]\nonumber\\
 &\qquad+\delta \EE \sum_{m=1}^{k_{1}}\left[\mathcal{L}_{0}U(\bar{Z}^{\veps}_{(m-1)\delta})+O\left(\delta^{1/2}+\left(\frac{\tau}{\veps}\right)^{3/2}\frac{1}{\delta}\right)\right]+U(z).\label{Eq:EstimatingExitProb}
\end{align}

Notice now that $U(\bar{Z}^{\veps}_{\bar{\tau}})$ is either greater or equal than $U(O_{j})\pm \zeta'$ on $(\overline{\partial D_{j}(\pm\zeta')}\cap  I^{\circ}_{i_{0}})$ or it is greater or equal than $U(O_{j})\mp \zeta$ on
$(\overline{\partial D_{j}(\pm\zeta)}\cap  I^{\circ}_{i_{1}})\cup (\overline{\partial D_{j}(\pm\zeta)}\cap  I^{\circ}_{i_{2}})$. The latter implies that
\begin{align}
 \EE U(\bar{Z}^{\veps}_{\bar{\tau}})&\geq (U(O_{j})\pm \zeta')(1-P^{\veps}_{x})+ (U(O_{j})\mp \zeta)P^{\veps}_{x}. \nonumber
 \end{align}

The latter and (\ref{Eq:EstimatingExitProb}) imply that up to deterministic constants that do not depend on the small parameters of the problem
\begin{align}
 (\zeta+\zeta^{'})P^{\veps}_{x}&\leq \zeta' + |U(O_{j})-U(z)| +|\EE\left[U(\bar{Z}^{\veps}_{\bar{\tau}^{\veps}_{j}(\pm\zeta)})-U(\bar{Z}^{\veps}_{k_{1}\delta+\tau})\right]|+|\EE\left[U(\bar{Z}^{\veps}_{k_{1}\delta+\tau})-U(\bar{Z}^{\veps}_{k_{1}\delta})\right]|\nonumber\\
 &\qquad+\left|\delta \EE \sum_{m=1}^{k_{1}}\left[\mathcal{L}_{0}U(\bar{Z}^{\veps}_{(m-1)\delta})+O\left(\delta^{1/2}+\left(\frac{\tau}{\veps}\right)^{3/2}\frac{1}{\delta}\right)\right]\right|\nonumber\\
 &\leq 2\zeta' +|\EE\left[U(\bar{Z}^{\veps}_{\bar{\tau}^{\veps}_{j}(\pm\zeta)})-U(\bar{Z}^{\veps}_{k_{1}\delta+\tau})\right]|+|\EE\left[U(\bar{Z}^{\veps}_{k_{1}\delta+\tau})-U(\bar{Z}^{\veps}_{k_{1}\delta})\right]|\nonumber\\
 &\qquad+  \EE \bar{\tau}\left(1+O\left(\delta^{1/2}+\left(\frac{\tau}{\veps}\right)^{3/2}\frac{1}{\delta}\right)\right) +   \left|\EE (\delta k_{1}-\bar{\tau})\right|\left(1+O\left(\delta^{1/2}+\left(\frac{\tau}{\veps}\right)^{3/2}\frac{1}{\delta}\right)\right) \nonumber\\
 &\leq 2\zeta' +\delta+\tau+ \delta^{3/2}+\left(\frac{\tau}{\veps}\right)^{3/2}+  \EE \bar{\tau}\left(1+O\left(\delta^{1/2}+\left(\frac{\tau}{\veps}\right)^{3/2}\frac{1}{\delta}\right)\right)  \nonumber\\
 &\leq 2\zeta' +\delta+\tau+ \delta^{3/2}+\left(\frac{\tau}{\veps}\right)^{3/2}+  \EE \bar{\tau}^{\veps}(\pm\zeta)\left(1+O\left(\delta^{1/2}+\left(\frac{\tau}{\veps}\right)^{3/2}\frac{1}{\delta}\right)\right)  \nonumber\\
 &\leq 2\zeta' +\delta+\tau+ \delta^{3/2}+\left(\frac{\tau}{\veps}\right)^{3/2}+  \zeta^{2}|\ln \zeta|\left(1+O\left(\delta^{1/2}+\left(\frac{\tau}{\veps}\right)^{3/2}\frac{1}{\delta}\right)\right).  \nonumber
\end{align}
where for the last line we used Lemma \ref{L:InteriorVertex}. Therefore, we have obtained that for sufficiently small $\tau<\delta\ll 1$ such that $\tau/\veps\downarrow 0$
\begin{align}
\PP_{x}\left(\bar{Z}^{\veps}_{\bar{\tau}}\notin (\overline{\partial D_{j}(\pm\zeta)}\cap  I^{\circ}_{i_{1}})\cup (\overline{\partial D_{j}(\pm\zeta)}\cap  I^{\circ}_{i_{2}})\right)
&\leq \frac{2\zeta'}{\zeta}+\zeta|\ln \zeta|+\frac{\delta+\tau+ \delta^{3/2}+\left(\frac{\tau}{\veps}\right)^{3/2}}{\zeta+\zeta'}\nonumber\\
&\leq \frac{2\zeta'}{\zeta}+\zeta|\ln \zeta|+\frac{\delta+\left(\frac{\tau}{\veps}\right)^{3/2}}{\zeta+\zeta'}.\nonumber
\end{align}

The right hand side of the last display can be made arbitrarily small, if we choose $\zeta'<\zeta$ small but such that $\frac{\delta+\left(\frac{\tau}{\veps}\right)^{3/2}}{\zeta+\zeta'}\downarrow 0$. This means that $\zeta^{'}<\zeta$ should be chosen small, but greater than $\delta+\left(\frac{\tau}{\veps}\right)^{3/2}$. Hence, under this condition, we get that $\PP_{x}\left(\bar{Z}^{\veps}_{\bar{\tau}}\notin (\overline{\partial D_{j}(\pm\zeta)}\cap  I^{\circ}_{i})\right)$ has approximately the same value for all $x\in \bar{D}_{j}(\pm\zeta')$ when $\tau<\delta<\frac{\tau}{\veps}\ll 1$ and $\left(\frac{\tau}{\veps}\right)^{3/2}\frac{1}{\delta}\ll 1$. Then, it remains to show that, in the limit, this value is actually equal to $p_{ji}$. This part of the proof however follows very closely the corresponding part of the proof of \cite[Lemma 3.6]{FreidlinWentzell88} for $Z^{\veps}$ when $\delta,\tau/\veps$ are sufficiently small and it will not be repeated here. This concludes  the proof of the lemma.
\end{proof}

\bibliographystyle{amsxport}
%\bibliography{hmm}

\begin{bibdiv}
\begin{biblist}

\bib{AbdulleCirilli:08}{article}{
      author={Abdulle, A.},
      author={Cirilli, S.},
       title={{S-ROCK: C}hebyshev methods for stiff stochastic differential
  equations},
        date={2008},
     journal={SIAM J. Sci. Comput.},
      volume={30},
       pages={997\ndash 1014},
}

\bib{AbdulleZygalakis:12}{article}{
      author={Abdulle, A.},
      author={Cohen, D.},
      author={Vilmart, G.},
      author={Zygalakis, K.~C.},
       title={High weak order methods for stochastic differential equations
  based on modified equations},
        date={2012},
     journal={SIAM J. Sci. Comput.},
      volume={34},
       pages={A1800\ndash A1823},
}

\bib{HMMReview2}{article}{
      author={Abdulle, A.},
      author={E, W.},
      author={Engquist, B.},
      author={Vanden-Eijnden, E.},
       title={The heterogeneous multiscale method},
        date={2012},
     journal={Acta Numerica},
       pages={1\ndash 87},
}

\bib{AbdulleLi:08}{article}{
      author={Abdulle, A.},
      author={Li, T.},
       title={{S-ROCK} methods for stiff {I}to {SDE}s},
        date={2008},
     journal={Commun. Math. Sci.},
      volume={6},
       pages={845\ndash 868},
}

\bib{BrinFreidlin2000}{article}{
      author={Brin, M.},
      author={Freidlin, M.~I.},
       title={On stochastic behavior of perturbed {H}amiltonian systems},
        date={2000},
     journal={Ergodic Theory and Dynamical Systems},
      volume={20},
       pages={55\ndash 76},
}

\bib{BurrageTian:01}{article}{
      author={Burrage, K.},
      author={Tian, T.},
       title={Stiffly accurate {R}unge-{K}utta methods for stiff stochastic
  differential equations},
        date={2001},
     journal={Comput. Phys. Commun.},
      volume={142},
       pages={186\ndash 190},
}

\bib{DuncanPavliotisLelievre2016}{article}{
      author={Duncan, A.},
      author={Leli\`{e}vre, T.},
      author={Pavliotis, G.A.},
       title={Variance reduction using nonreversible {L}angevin samplers},
        date={2016},
     journal={J. Stat. Phys.},
      volume={163},
       pages={457\ndash 491},
}

\bib{DuncanPavliotisZygalakis2017}{article}{
      author={Duncan, A.B.},
      author={Pavliotis, G.},
      author={Zygalakis, K.C.},
       title={Nonreversible langevin samplers: Splitting schemes, analysis and
  implementation},
        date={2017},
     journal={arXiv: 1701.04247},
}

\bib{DurmusMoulines2017}{article}{
      author={Durmus, A.},
      author={Moulines, E.},
       title={Nonasymptotic convergence analysis for the unadjusted langevin
  algorithm},
        date={2017},
     journal={Ann. Appl. Probab.},
      volume={27},
      number={3},
       pages={1551\ndash 1587},
}

\bib{E:11}{book}{
      author={E, W.},
       title={Principles of multiscale modeling},
   publisher={Cambridge University Press},
     address={Cambridge},
        date={2011},
}

\bib{EEngquist:03}{article}{
      author={E, W.},
      author={Engquist, B.},
       title={The heterogeneous multi-scale methods},
        date={2003},
     journal={Commun. Math. Sci.},
      volume={1},
       pages={87\ndash 133},
}

\bib{HMMReview1}{article}{
      author={E, W.},
      author={Engquist, B.},
      author={Li, X.},
      author={Ren, W.},
       title={Heterogeneous multiscale methods: {A} review},
        date={2007},
     journal={Communications in Computaitonal Physics},
      volume={2},
       pages={367\ndash 450},
}

\bib{ELiuVE:05}{article}{
      author={E, W.},
      author={Liu, D.},
      author={Vanden-Eijnden, E.},
       title={Analysis of multiscale methods for stochastic differential
  equations},
        date={2005},
     journal={Comm. Pure App. Math.},
      volume={58},
       pages={1544\ndash 1585},
}

\bib{ELu:07}{article}{
      author={E, W.},
      author={Lu, J.},
       title={Seamless multiscale modeling via dynamics on fiber bundles},
        date={2007},
     journal={Commun. Math. Sci.},
      volume={5},
       pages={649\ndash 663},
}

\bib{ERenVE:09}{article}{
      author={E, W.},
      author={Ren, W.},
      author={Vanden-Eijnden, E.},
       title={A general strategy for designing seamless multiscale methods},
        date={2009},
     journal={J. Comput. Phys.},
      volume={228},
       pages={5437–5453},
}

\bib{FreidlinWeber2004}{article}{
      author={Freidlin, M.~I.},
      author={Weber, M.},
       title={Random perturbations of dynamical systems and diffusion processes
  with conservation laws},
        date={2004},
     journal={Probability Theory and Related Fields},
      volume={128},
       pages={441\ndash 466},
}

\bib{FreidlinWentzell88}{book}{
      author={Freidlin, M.~I.},
      author={Wentzell, A.~D.},
       title={Random perturbations of dynamical systems},
     edition={2},
   publisher={Springer-Verlag},
     address={New York},
        date={1988},
}

\bib{FreidlinWentzell1993}{article}{
      author={Freidlin, M.~I.},
      author={Wentzell, A.~D.},
       title={Diffusion processes on graphs and the averaging principle},
        date={1993},
     journal={Annals of Probability},
      volume={21},
       pages={2215\ndash 2245},
}

\bib{FW2}{article}{
      author={Freidlin, M.~I.},
      author={Wentzell, A.~D.},
       title={Random perturbations of {H}amiltonian systems},
        date={1994},
     journal={Memoirs of the American Mathematical Society},
      volume={109},
      number={523},
}

\bib{GivonKevrekidisKupferman:06}{article}{
      author={Givon, D.},
      author={Kevrekidis, I.~G.},
      author={Kupferman, R.},
       title={Strong convergence of projective integration schemes for
  singularly perturbed stochastic differential systems},
        date={2006},
     journal={Commun. Math. Sci.},
      volume={4},
       pages={707\ndash 729},
}

\bib{HutzenthalerJentzenKloeded2011}{article}{
      author={Hutzenthaler, M.},
      author={Jentzen, A.},
      author={Kloeden, P.E.},
       title={Strong and weak divergence in finite time of {E}uler's method for
  stochastic differential equations with non-globally {L}ipschitz continuous
  coefficients},
        date={2011},
     journal={Proceedings of the Royal Society A: Mathematical, Physical and
  Enginnering Science},
      volume={467},
       pages={1563–1576},
}

\bib{HwangMaSheu2005}{article}{
      author={Hwang, C.-R.},
      author={Hwang-Ma, S.Y.},
      author={Sheu, S.-J.},
       title={Accelerating diffusions},
        date={2005},
     journal={The Annals of Applied Probability},
      volume={15},
       pages={1433–\ndash 1444},
}

\bib{OttobrePillaiSpiliopoulos2017}{article}{
      author={Ottobre, M.},
      author={Pillai, N.S.},
      author={Spiliopoulos, K.},
       title={Optimal scaling of the {MALA} algorithm with irreversible
  proposals for {G}aussian targets},
        date={2017},
     journal={submited, arXiv:1702.01777},
}

\bib{PavliotisStuart:08}{book}{
      author={Pavliotis, G.A.},
      author={Stuart, A.M.},
       title={Multiscale methods: Averaging and homogenization},
   publisher={Springer-Verlag},
     address={New York},
        date={2008},
}

\bib{ReyBelletSpiliopoulos2014}{article}{
      author={Rey-Bellet, L.},
      author={Spiliopoulos, K.},
       title={Irreversible {L}angevin samplers and variance reduction: a large
  deviation approach},
        date={2015},
     journal={Nonlinearity},
      volume={28},
       pages={2081\ndash 2103},
}

\bib{ReyBelletSpiliopoulos2015}{article}{
      author={Rey-Bellet, L.},
      author={Spiliopoulos, K.},
       title={Variance reduction for irreversible {L}angevin samplers and
  diffusion on graphs},
        date={2015},
     journal={Electronic Communications in Probability},
      volume={20},
       pages={1\ndash 16},
}

\bib{ReyBelletSpiliopoulos2016}{article}{
      author={Rey-Bellet, L.},
      author={Spiliopoulos, K.},
       title={Improving the convergence of reversible samplers},
        date={2016},
     journal={Journal of Statistical Physics},
      volume={164},
       pages={472\ndash 494},
}

\bib{RobertsTweedie:96}{article}{
      author={Roberts, G.},
      author={Tweedie, R.},
       title={Exponential convergence of {L}angevin distributions and their
  discrete approximations},
        date={1996},
     journal={Bernoulli},
      volume={2},
       pages={341\ndash 363},
}

\bib{Skorokhod}{book}{
      author={Skorokhod, A.},
       title={Asymptotic methods in the theory of stochastic differential
  equations},
      series={AMS Translation of Mathematical Monographs},
   publisher={American Mathematical Society},
     address={Providence, RI},
        date={1989},
      volume={78},
}

\bib{TaoOwhadiMarsden2010}{article}{
      author={Tao, M.},
      author={Owhadi, H.},
      author={Marsden, J.~E.},
       title={Nonintrusive and structure presering multiscale integration of
  stiff {ODE}s, {SDE}s, and {H}amiltonian systems with hidden slow dynamics via
  flow averaging},
        date={2010},
     journal={Multiscale Modeling and Simulation},
      volume={8},
       pages={1269\ndash 1324},
}

\bib{EVE:03}{article}{
      author={Vanden-Eijnden, E.},
       title={Numerical techniques for multiscale dynamical systems with
  stochastic effects},
        date={2003},
     journal={Commun. Math. Sci.},
      volume={1},
       pages={385\ndash 391},
}

\bib{EVE:07}{article}{
      author={Vanden-Eijnden, E.},
       title={On {HMM}-like integrators and projective integration methods for
  systems with multiple time scales},
        date={2007},
     journal={Commun. Math. Sci.},
      volume={5},
       pages={495\ndash 505},
}

\end{biblist}
\end{bibdiv}

\end{document}